\documentclass[11pt,oneside,reqno]{article}

\usepackage[margin=1.3in]{geometry}
\usepackage[title]{appendix}
\usepackage{amsmath,amsthm,amssymb,mathtools}
\usepackage{mathrsfs}
\usepackage{enumitem}
\usepackage[authoryear]{natbib}
\usepackage[colorlinks,citecolor=blue,urlcolor=blue,linkcolor=blue]{hyperref}

\hypersetup{
 pdftitle={A quantitative CLT for the zeros of random trigonometric polynomials},
 pdfauthor={Ao Huang, Zhonggen Su, Xiaolin Wang}
}

\numberwithin{equation}{section}
\allowdisplaybreaks[4]

\theoremstyle{plain}
\newtheorem{theorem}{Theorem}[section]

\newtheorem{proposition}{Proposition}[section]
\newtheorem{lemma}{Lemma}[section]

\theoremstyle{definition}

\makeatletter
\renewcommand\section{\@startsection{section}{1}{\z@}%
 {-3.5ex \@plus -1ex \@minus -.2ex}%
 {1.3ex \@plus .2ex}%
 {\centering\small\scshape\mathversion{bold}}}
\renewcommand\subsection{\@startsection{subsection}{2}{\z@}%
 {-3.5ex \@plus -1ex \@minus -.2ex}%
 {1ex \@plus .2ex}%
 {\normalfont\normalsize\bfseries\mathversion{bold}}}
\makeatother

\renewcommand{\le}{\leqslant}

\renewcommand{\ge}{\geqslant}

\newcommand{\E}{\mathbb E}
\newcommand{\Var}{\operatorname{Var}}
\newcommand{\Cov}{\operatorname{Cov}}

\title{A quantitative CLT for the zeros of random trigonometric polynomials}
\author{Ao Huang$^1$, Zhonggen Su$^1$, Xiaolin Wang$^2$}
\date{\it $^1$Zhejiang University, $^2$The Chinese University of Hong Kong \\[2ex]  }

\begin{document}

\maketitle
\begin{abstract}
Let \(X_N(t)\), \(t\in[0,\pi]\), be a stationary Gaussian
trigonometric polynomial, and let \(\mathcal Z_N\) denote the
number of zeros of the rescaled process \(X_N(t/N)\)
on \([0,N\pi]\). We establish the quantitative central limit theorem
\[
 d_{\mathrm W}\left(
 \frac{\mathcal Z_N-\mathbb E[\mathcal Z_N]}
 {\sqrt{\operatorname{Var}(\mathcal Z_N)}},Z\right)
 \le C\frac{(\log N)^2}{\sqrt N},
\]
where \(Z\) is standard Gaussian and \(d_{\mathrm W}\) denotes
the Wasserstein-1 distance.
The proof combines Ornstein--Uhlenbeck regularization of the
singular Kac representation with direct estimates of the covariance
between the Malliavin--Stein factor and Stein test functions.
A lattice lower bound of order \(N^{-1/2}\) shows that the
convergence rate is optimal up to logarithmic factors.

\medskip
\noindent{\bf Key words:} Kac--Rice formula; Malliavin--Stein method; quantitative CLT; random trigonometric polynomials; Wiener chaos; zero counts.
\smallskip

\noindent{\bf 2020 Mathematics Subject Classification}: Primary 60G15;
Secondary 60F05, 60H07, 60G10.
\end{abstract}

\section{Introduction and main results}

Let
\[
 X_N(t)=\frac1{\sqrt N}\sum_{n=1}^N
 \bigl(a_n\sin(nt)+b_n\cos(nt)\bigr),\qquad t\in[0,\pi],
\]
where \(a_n\) and \(b_n\) are independent standard Gaussian random
variables. We consider the microscopic rescaling \(Y_N(t):=X_N(t/N)\), \(t\in[0,T_N]\), where \(T_N:=N\pi\).
The process \(Y_N\) is centered and stationary, with covariance
\[
 r_N(\tau):=\mathbb E[Y_N(0)Y_N(\tau)]
 =\frac1N\sum_{n=1}^N\cos\left(\frac{n\tau}{N}\right).
\]
Moreover, for every fixed \(\tau\),
\[
 r_N(\tau)\longrightarrow r(\tau):=
 \int_0^1\cos(\lambda\tau)\,d\lambda=\frac{\sin\tau}{\tau},
\]
where \(r(0)=1\) by continuous extension.

Let $\mathcal Z_N=N^{Y_N}_{[0,T_N]}(0)$ denote the number of zeros of \(Y_N\) on \([0,T_N]\), and define
\[
 S_N=\frac{\mathcal Z_N-\mathbb E[\mathcal Z_N]}{\sqrt{T_N}},
 \qquad v_N^2=\operatorname{Var}(S_N).
\]
The asymptotic distribution of zero and level-crossing counts for random
trigonometric polynomials has been studied by several authors. \citet{GranvilleWigman2011} proved a central limit theorem for the
zero count of the stationary ensemble. \citet{AzaisLeon2013} extended this result to crossings
of arbitrary fixed levels by combining a Wiener chaos expansion with a
comparison to the stationary sinc process. \citet{AzaisDalmaoLeon2016} later established a related central
limit theorem for classical random
cosine polynomials, obtaining asymptotic Gaussianity through multiple
Wiener--It\^o integrals and the Peccati--Tudor theorem.

For the stationary model considered here, \citet{GranvilleWigman2011} and \citet{AzaisLeon2013} show that
\begin{equation}
 \label{eq:known-variance-asymptotics}
 v_N^2\longrightarrow V^2\in(0,\infty),
\end{equation}
and
\[
 \frac{\mathcal Z_N-\mathbb E[\mathcal Z_N]}
 {\sqrt{\operatorname{Var}(\mathcal Z_N)}}
 \xrightarrow{\mathrm d} Z,\qquad Z\sim\mathcal N(0,1).
\]
The purpose of the present paper is to make this convergence quantitative.

More recently, \citet{Gass2023} obtained cumulant asymptotics for
the zero-counting measures of a class of real Gaussian processes that
includes Gaussian trigonometric polynomials and the stationary sinc
process. In particular, these results describe the asymptotics of
fixed-order moments. \citet{AnconaGassLetendreStecconi2025} developed a broader
cumulant approach to the volumes of zero sets and to critical-point
counts of Gaussian fields. Estimates for each fixed moment do not,
without further control of their dependence on the order, yield an
error bound uniform over all Lipschitz test functions. 

\citet{CoutinPeralta2023} studied the quantitative convergence of the
zero count of a random trigonometric polynomial on a fixed microscopic
interval to that of the limiting sinc process. Here the microscopic
interval grows with \(N\), and the limiting law after centering and
normalization is Gaussian.

To quantify the rate of normal approximation, we introduce the
Wasserstein-1 distance. For integrable real-valued random variables \(F\) and \(G\), the
Wasserstein-1 distance is defined by
\[
 d_{\mathrm W}(F,G)=\sup_{\operatorname{Lip}(h)\le1}
 \left|\mathbb E[h(F)]-\mathbb E[h(G)]\right|.
\]
Our main result is the following.

\begin{theorem}
\label{thm:main}
There exist constants \(0<c\le C<\infty\) and \(N_0\ge2\)
such that, for every \(N\ge N_0\),
\begin{equation}
 \label{eq:main-bound}
 \frac{c}{\sqrt N}
 \le d_{\mathrm W}\left(\frac{S_N}{v_N},Z\right)
 \le C\frac{(\log N)^2}{\sqrt N},
\end{equation}
where \(Z\sim\mathcal N(0,1)\).
\end{theorem}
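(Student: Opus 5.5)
The proof splits into the upper bound, obtained from the Malliavin--Stein method applied to a regularized Kac functional, and the lower bound, obtained from the integer-valuedness of \(\mathcal Z_N\).

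\emph{Upper bound.} Represent \(Y_N\) as an isonormal process on the finite-dimensional space spanned by \((a_n,b_n)\). We start from the Kac formula
\[
\mathcal Z_N=\lim_{\varepsilon\to0}\frac1{2\varepsilon}\int_0^{T_N}\mathbf 1_{\{|Y_N(t)|<\varepsilon\}}|Y_N'(t)|\,dt .
\]
The integrand \(\delta_0(Y_N)|Y_N'|\) is singular. We replace it by an Ornstein--Uhlenbeck regularization \(F_{N,s}=P_s S_N\) with \(s=s_N\) of order \(N^{-1}\), or a fixed power of it. This amounts to smoothing the delta function by a Gaussian of variance \(1-e^{-2s}\) inside the chaos expansion, and it gives explicit Hermite coefficients with the geometric damping \(e^{-qs}\) on the \(q\)-th chaos.

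\textbf{Step 1.} Bound \(d_{\mathrm W}(S_N,P_sS_N)\le \|S_N-P_sS_N\|_{L^1}\). Using the chaos expansion of \(S_N\) (Azaïs--León), this is at most \(\bigl(\sum_q(1-e^{-qs})^2\|J_qS_N\|^2\bigr)^{1/2}\). We need the tail estimate \(\sum_{q>Q}\|J_qS_N\|^2\lesssim Q^{-1}\), up to logarithms, which follows from the decay of the Hermite coefficients of \(\delta_0\) and \(|x|\) together with the integrability of \(r_N^q\) and its derivatives. Choosing \(Q\simeq\log N\) and \(s\simeq N^{-1}\) gives an error \(\lesssim (\log N)/\sqrt N\).

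\textbf{Step 2.} For the smoothed functional \(F=P_sS_N/v_N\), which is smooth in Malliavin's sense, apply the Stein bound
\[
d_{\mathrm W}(F,Z)\le \sup_{f}\bigl|\E[f'(F)(1-\langle DF,-DL^{-1}F\rangle)]\bigr| ,
\]
where the supremum runs over Stein solutions \(f\) with \(\|f'\|_\infty\le\sqrt{2/\pi}\) and \(\|f''\|_\infty\le 2\). Rather than bounding the whole \(L^2\) norm of \(G-\E G\), with \(G=\langle DF,-DL^{-1}F\rangle\), chaos by chaos (that route gives a worse rate once the singularity is counted), we estimate the covariance \(\Cov(f'(F),G)\) directly, as the abstract indicates. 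Writing \(G\) as a double integral over \([0,T_N]^2\) of local quantities, we split according to whether \(|t-u|\le \log N\). The off-diagonal part is controlled by the decay \(|r_N(\tau)|\lesssim \min(1,|\tau|^{-1})\), away from the aliasing points \(\tau\in 2\pi N\mathbb Z\), which lie outside \([0,T_N]\). The near-diagonal part gives variance of order \(T_N\cdot(\log N)^{2}/T_N^2\). This yields \(\Cov(f'(F),G)\lesssim (\log N)^2/\sqrt N\). Finally, \(v_N\to V>0\) by \eqref{eq:known-variance-asymptotics}, which handles the normalization.

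\emph{Main obstacle.} The hard step is Step 2. Because \(|r(\tau)|\sim 1/\tau\) is not square integrable in the required strong sense for the fourth-moment-type bounds, naive contraction estimates lose a \(\log\) or worse at every chaos order. The regularization constant \(e^{-2s}\) must be balanced against the blow-up of the Hermite coefficients, uniformly in \(q\le Q\). To do this, I would use Arcones-type inequalities for the diagram sums, applied to the joint vector \((Y_N,Y_N')\) normalized to have identity covariance.

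\emph{Lower bound.} Since \(\mathcal Z_N\) is integer-valued, \(S_N/v_N\) lives on a lattice of mesh \(h=1/(v_N\sqrt{T_N})\asymp N^{-1/2}\). For any lattice variable \(W\) with mesh \(h\), test against the Lipschitz function \(x\mapsto \operatorname{dist}(x,\text{lattice})\). This function vanishes on the support of \(W\) and has Gaussian expectation \(h/4+o(h)\), since the density of \(Z\) varies slowly on scale \(h\). Hence \(d_{\mathrm W}\ge c h\ge c/\sqrt N\) for \(N\ge N_0\).
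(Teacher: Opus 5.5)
Your lower bound is the paper's lattice argument and is fine. The upper bound, however, has a genuine gap in both steps.

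\textbf{Step 1.} Write \(\rho=e^{-s}\). The choice \(Q\simeq\log N\), together with \(\sum_{q>Q}\|J_qS_N\|_2^2\lesssim Q^{-1}\), bounds \(\|S_N-P_sS_N\|_2\) only by \((\log N)^{-1/2}\), not by \((\log N)/\sqrt N\). A summation by parts would give \(O(\sqrt s)\) if that tail bound held, but it does not follow from the argument you indicate. Indeed, \(\|f_q\|_{L^2(\gamma_2)}^2\asymp q^{-1/2}\) and \(\int\|\mathcal R_N(\tau)\|_{\mathrm{op}}^q\,d\tau\ge\int|r_N(\tau)|^q\,d\tau\gtrsim q^{-1/2}\). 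So the crude estimate is of order at least \(q^{-1}\) per chaos, which is not even summable.

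Heuristically, a Mehler perturbation of size \(\sqrt s\) changes the count at roughly \(T_N\sqrt s\) near-degenerate extrema. This suggests \(\|S_N-P_sS_N\|_2\asymp s^{1/4}\), so an error of order \(N^{-1/2}\) would force \(s\lesssim N^{-2}\). In any case \(s\lesssim N^{-1}\), and then \(e^{-qs}\ge e^{-1}\) for all \(q\le N\). So Step~2 must control chaoses of order at least \(N\), with essentially no regularization; a diagram analysis over \(q\le\log N\) is not enough.

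The paper avoids Step~1 altogether. Its bound for \(S_{N,\rho}/\sigma_{N,\rho}\) is uniform in \(\rho\in[1/2,1)\), so one lets \(\rho\uparrow1\) at fixed \(N\), where \(\|S_N-S_{N,\rho}\|_2\to0\) needs no rate.

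\textbf{Step 2.} This is where the real difficulty lies, and your sketch gives no mechanism for it. The local integrand of \(\langle DF,-DL^{-1}F\rangle\) contains \(B_\rho(Y_N(t))\) and \(B_\rho'(Y_N(t))\), whose second moments are of order \(s^{-1/2}\) and \(s^{-3/2}\). A near-diagonal variance estimate like the one you state, or chaos-by-chaos diagram bounds, therefore lose powers of \(s^{-1}\). Arcones' inequality also gives no decay near \(\tau=0\), where \(\mathcal R_N(\tau)\to I\). Saying that you estimate \(\Cov(f'(F),G)\) directly does not explain how to do so using first moments only.

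In the paper this is done in four moves:
\begin{enumerate}
\item Integration by parts in time removes \(B_\rho'\). It writes \(\Gamma_{N,\rho}\) as a boundary term plus a multiple of \(T_N^{-1}\int_{[0,T_N]^2}K_{N,\rho}(a,b)\,da\,db\), where \(K_{N,\rho}(a,b)\) depends on only five Gaussian coordinates.
\item The identity \(B_\rho\varphi=\varphi(0)\varphi_{s_\rho}\) gives a weighted first moment of \(K_{N,\rho}(a,b)\) bounded by \(C\chi(b-a)\), uniformly in \(\rho\).
\item Resampling those five coordinates reduces \(\Cov(K_{N,\rho}(a,b),g(S_{N,\rho}))\) to the replacement estimate \(\E_\eta|\mathcal Z_N^x-\mathcal Z_N^y|\le C(1+|x|+|y|)^2\log T_N\). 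This is proved by a Kac--Rice count of pairs \((t,\theta)\) with \(f_\theta(t)=f_\theta'(t)=0\) away from \(a,b\), and by Jensen's formula near them.
\item The Mehler coupling transfers this estimate to \(S_{N,\rho}\) uniformly in \(\rho\).
\end{enumerate}
Without this, or another argument giving a \(\rho\)-uniform covariance bound, your Step~2 does not yield the rate \((\log N)^2/\sqrt N\).
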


We do not determine whether the logarithmic factors in the upper bound of
\eqref{eq:main-bound} are necessary.

We now outline the proof strategy for the upper bound,
highlighting the main difficulties and how they are addressed. Writing \(\lambda_N=\operatorname{Var}(Y_N'(0))\),
we start
from the Kac representation
\[
 \mathcal Z_N
 =
 \sqrt{\lambda_N}\int_0^{T_N}
 \delta_0(Y_N(t))
 \left|
 \frac{Y_N'(t)}{\sqrt{\lambda_N}}
 \right|\,dt.
\]
This representation involves the singular local functional
\((x,y)\mapsto\delta_0(x)|y|\), which is distribution-valued
in the first coordinate and does not belong to \(L^2(\gamma_2)\).
Here \(\gamma\) denotes the standard Gaussian probability measure on
\(\mathbb R\), and \(\gamma_2=\gamma\otimes\gamma\). This singularity complicates the direct use of the Kac
representation for quantitative normal approximation.

A standard way to give rigorous meaning to such Kac representations is
to mollify the Dirac factor by a Gaussian approximate identity. This
approach is also used in the construction of the Wiener chaos
expansion of crossing counts, see
\citet{AzaisLeon2013,AzaisDalmaoLeon2016,AzaisWschebor2009}.

The qualitative arguments of \citet{AzaisLeon2013} and
\citet{AzaisDalmaoLeon2016} use finite truncations of the
Wiener chaos expansion, with the truncation level tending
to infinity only after taking the limit in \(N\).
To obtain a quantitative rate through this approach, one
would need to control both the truncation error and the
normal approximation error as the truncation level grows
with \(N\).

We therefore use the Ornstein--Uhlenbeck semigroup as a soft spectral
regularization. Let \(P_\rho\) denote the Ornstein--Uhlenbeck semigroup
on the underlying Gaussian space, using the same notation for its
finite-dimensional versions. We write \(P_\rho^{(2)}\) for the semigroup
acting independently on two coordinates. If \(S_N=\sum_{q\ge2} I_{q,N}\)
denotes the Wiener chaos expansion of the centered zero count, then
\[
 P_\rho S_N
 =
 \sum_{q\ge2}\rho^q I_{q,N},
 \qquad 0<\rho<1.
\]
Thus every chaos is retained, while higher orders are increasingly
damped. We set \(S_{N,\rho}=P_\rho S_N\).
At the level of the local Kac functional, this is equivalent to
\[
 F_\rho
 :=
 P_\rho^{(2)}
 \left(
 \delta_0\otimes|\cdot|-\frac1\pi
 \right).
\]
Consequently,
\[
 S_{N,\rho}
 =
 \frac{\sqrt{\lambda_N}}{\sqrt{T_N}}
 \int_0^{T_N}
 F_\rho\left(
 Y_N(t),
 \frac{Y_N'(t)}{\sqrt{\lambda_N}}
 \right)\,dt.
\]
For each fixed \(N\),
\(S_{N,\rho}\to S_N\) in \(L^2\) as \(\rho\uparrow1\).

By Mehler's formula,
\[
 P_\rho F(\xi)
 =
 \mathbb E_{\xi'}
 \left[
 F\left(
 \rho\xi+\sqrt{1-\rho^2}\,\xi'
 \right)
 \,\middle|\,\xi
 \right],
\]
where \(\xi'\) is an independent copy of the underlying Gaussian
coefficient vector. Thus the Ornstein--Uhlenbeck regularization may be
viewed as adding a small Gaussian perturbation to the entire
trigonometric polynomial and averaging over this perturbation.
At the level of the Kac density, the value coordinate undergoes the
familiar Gaussian mollification, while the derivative factor is
regularized at the same time. The precise kernels are introduced in
Section~\ref{sec:preliminaries}.

The coupling
\(\rho\xi+\sqrt{1-\rho^2}\,\xi'\) in Mehler's formula is closely related to the Gaussian interpolation
used by 
\cite{BeliaevMuirheadRivera2020}. They couple a Gaussian field with an
independent copy to study covariances of topological events. Their
interpolation motivates our use of the Mehler coupling as a
probabilistic tool. Here we apply it to regularize the singular
zero-count functional; its role in comparing zero counts will become
clear after we identify the Gaussian variables involved in the
Malliavin--Stein estimate.

For the normal approximation, we use the Malliavin--Stein method
and estimate the covariance with the Stein test function directly. Write
\[
 \sigma_{N,\rho}^2=\operatorname{Var}(S_{N,\rho}),\qquad
 \Gamma_{N,\rho}=\left\langle DS_{N,\rho},
 -DL^{-1}S_{N,\rho}\right\rangle_{\mathfrak H}.
\]
If \(f_h\) solves the normal Stein equation associated with a
\(1\)-Lipschitz function \(h\), then
\[
 \mathbb E\left[h\left(S_{N,\rho}/\sigma_{N,\rho}\right)\right]
       -\mathbb E[h(Z)]
 =-\sigma_{N,\rho}^{-2}
 \operatorname{Cov}\left(\Gamma_{N,\rho},
 f_h'\left(S_{N,\rho}/\sigma_{N,\rho}\right)\right).
\]
This follows from Malliavin integration by parts; see
\citet{NourdinPeccati2009,NourdinPeccati2012}. A variance bound for
\(\Gamma_{N,\rho}\) would square the increasingly concentrated
regularized Kac density. We instead expose a local term in
\(\Gamma_{N,\rho}\) and estimate its covariance with the test function
using first absolute moments.

To estimate this covariance, we first rewrite
\(\Gamma_{N,\rho}\) by integrating by parts in the time
variable. This yields an endpoint term and a double integral
of local kernels \(K_{N,\rho}(a,b)\); the precise representation
is given in Section~\ref{sec:local-kernel}. For each fixed pair \(a,b\), the kernel can be written as a
function of at most five independent Gaussian coordinates.
Resampling these coordinates while leaving the remaining
independent coordinates unchanged gives a copy of
\(S_{N,\rho}\) that is independent of the original kernel.
The covariance estimate therefore reduces to controlling
the change in \(S_{N,\rho}\) under this replacement.

Fixing these coordinates and keeping their orthogonal complement
unchanged gives two coupled fields \(Y_N^x\) and \(Y_N^y\), with zero
counts \(\mathcal Z_N^x\) and \(\mathcal Z_N^y\). The precise construction is given
in Section~\ref{subsec:conditional-fields}, in particular
\eqref{eq:conditional-process}. The pivotal-point
viewpoint of \cite{BeliaevMuirheadRivera2020} suggests how to compare
these counts. We join the conditional fields by the linear path
\(f_\theta(t):=(1-\theta)Y_N^x(t)+\theta Y_N^y(t),\;0\le\theta\le1.
\)
This path is distinct from the Mehler coupling used above. An
interior simple zero persists as \(\theta\) varies, so the number of
interior zeros can change only at a zero-level critical point,
\(f_\theta(t)=f_\theta'(t)=0.\)
Away from the conditioning times \(a,b\), a two-dimensional
Kac--Rice formula estimates the number of pairs
\((t,\theta)\) satisfying these equations, with endpoint
crossings treated separately. Near \(a\) and \(b\), the conditional covariance matrix of
the value and its first two derivatives need not remain
uniformly nondegenerate. We therefore bound the two local
zero counts separately using analyticity and Jensen's formula. Combining these estimates gives
\begin{equation}
 \label{eq:intro-replacement}
 \mathbb E\left|\mathcal Z_N^x-\mathcal Z_N^y\right|
 \le C(1+|x|+|y|)^2\log T_N.
\end{equation}

The logarithmic factor in this bound comes from the decay
estimates used to control the perturbation over the growing
interval. For each fixed \(j\), the covariance bounds have
the form
\[
 |r_N^{(j)}(\tau)|\le C_j\chi(\tau),
 \qquad |\tau|\le T_N,
\]
where \(\chi(0):=1\) and
\(\chi(\tau):=\min\{1,|\tau|^{-1}\}\) for \(\tau\ne0\).
The perturbation inherits this decay away from \(a\) and \(b\),
and integrating the resulting Rice estimate contributes
a factor of \(\log T_N\).

The Mehler coupling transfers \eqref{eq:intro-replacement} to the
regularized count. Resampling just the coordinates supporting \(K_{N,\rho}(a,b)\) then bounds its covariance with a Lipschitz
function of \(S_{N,\rho}\). A uniform weighted first moment of the
kernel supplies the other factor. Finally, integration over \(a,b\)
costs a second logarithm through the covariance envelope \(\chi(b-a)\).
This yields
\[
 d_{\mathrm W}\left(S_{N,\rho}/\sigma_{N,\rho},Z\right)
 \le C(c_0)\frac{(\log N)^2}{\sqrt N}
\]
whenever \(\sigma_{N,\rho}^2\ge c_0>0\), uniformly in
\(\rho\in[1/2,1)\). For each fixed \(N\), we can therefore let \(\rho\uparrow1\) without choosing a regularization scale depending
on \(N\).

The rest of the paper is organized as follows.
Section~\ref{sec:preliminaries} collects the properties of \(Y_N\)
needed below and introduces the Ornstein--Uhlenbeck regularization.
In Section~\ref{sec:local-kernel}, integration by parts reveals which
Gaussian variables enter the normal approximation argument.
Section~\ref{sec:replacement} then bounds the effect of changing
these variables on the zero count.
Section~\ref{sec:malliavin-stein} combines these estimates to prove
Theorem~\ref{thm:main}.
The supporting estimates are proved in
Appendices~\ref{app:projection-estimates} and
\ref{app:soft-kernel-details}.

Throughout the paper, \(c,C\) denote positive numerical constants
that may change from line to line. Parameter dependence is indicated
explicitly, as in \(C_j\), \(C_\rho\), and \(C(c_0)\). For \(p\ge1\)
and a random variable \(F\), we write \(\|F\|_p=\bigl(\E|F|^p\bigr)^{1/p}\).
For \(x\in\mathbb R^d\), \(|x|\) denotes the Euclidean norm. For a
real matrix \(A\), we write
\[
 \|A\|_{\mathrm{op}}=\sup_{|x|=1}|Ax|
 =\sqrt{\lambda_{\max}(A^\top A)}.
\]
The notation \(A\preceq B\) for symmetric matrices means that
\(B-A\) is positive semidefinite. We also use the abbreviation
\(L_T=\log T\).
\section{Preliminaries and quantitative reduction}
\label{sec:preliminaries}

In this section we introduce the notation and the Gaussian framework
used throughout the paper. After recording the covariance properties of
the rescaled process, we construct the Ornstein--Uhlenbeck
regularization of the Kac functional and identify its Wiener chaos
expansion. We then state the uniform normal approximation Proposition~\ref{prop:regularized-clt} to which
the proof of Theorem~\ref{thm:main} will be reduced.

\subsection{The Gaussian model and covariance estimates}

Recall that
\[
Y_N(t)
=
\frac1{\sqrt N}
\sum_{n=1}^N
\left(
a_n\sin\left(\frac{nt}{N}\right)
+
b_n\cos\left(\frac{nt}{N}\right)
\right),
\qquad
t\in[0,T_N],
\qquad
T_N=N\pi.
\]
The process \(Y_N\) is centered, stationary, and has covariance
\[
r_N(\tau)
=
\E[Y_N(0)Y_N(\tau)]
=
\frac1N
\sum_{n=1}^N
\cos\left(\frac{n\tau}{N}\right).
\]

Recall that \(\lambda_N=\operatorname{Var}(Y_N'(0))\).
By stationarity,
\[
\lambda_N
=
-r_N''(0)
=
\frac1N\sum_{n=1}^N\left(\frac nN\right)^2.
\]
We denote the standardized derivative by
\(V_N(t)=Y_N'(t)/\sqrt{\lambda_N}\).
Note that
\[
\Var(Y_N(t))=1,
\qquad
\Var(V_N(t))=1,
\qquad
\Cov(Y_N(t),V_N(t))=0.
\]
For every \(t\), the Gaussian vector
\(\boldsymbol{U}_N(t)
:=
\bigl(Y_N(t),V_N(t)\bigr)
\)
has law \(\gamma_2=\gamma\otimes\gamma\). 

For a compact interval \(I\subset\mathbb R\) and a level \(u\in\mathbb R\),
we denote by
\[
\mathcal Z_N(I;u)
=
N_I^{Y_N}(u)
=
\#\{t\in I:Y_N(t)=u\}
\]
the number of crossings of the level \(u\) by \(Y_N\) on \(I\), and write \(\mathcal Z_N(I) =\mathcal Z_N(I;0)\).

We recall the following form of Bulinskaya's lemma
\citep[Proposition~1.20]{AzaisWschebor2009}.
Let \(U\) be a real-valued process with \(C^1\) sample paths
on a compact interval \(J\). If the density \(p_{U(t)}(z)\)
is uniformly bounded for \(t\in J\) and \(z\) in a neighborhood
of zero, then almost surely there is no \(t\in J\) such that
\(U(t)=U'(t)=0\).

Applied to \(Y_N\), this gives the almost sure absence of multiple
zeros on compact intervals. The process also has no zero at a prescribed
deterministic time. Consequently, the choice of open, closed, or
half-open endpoints is immaterial for the unconditioned counts. A
nonzero trigonometric polynomial of degree \(N\) has at most \(2N\)
zeros over one period, counted with multiplicity. In particular,
\[
 0\le\mathcal Z_N\le2N\qquad\text{almost surely}.
\]
The zero count therefore belongs to \(L^p(\Omega)\) for every finite \(p\) at each fixed \(N\). 

The Rice formula
\citep[see, e.g.,][Chapter~3, Eq.~(3.2)]{AzaisWschebor2009} gives
\begin{equation}
\label{eq:mean-zero-count}
\E\mathcal Z_N(I)
=
\frac{\sqrt{\lambda_N}}{\pi}|I|
\end{equation}
for every compact interval \(I\). In particular,
\(\E\mathcal Z_N = N\sqrt{\lambda_N}\).

For \(k\ge1\), let
\[
m_{k,N}
=
\frac1N
\sum_{n=1}^N
\left(\frac nN\right)^k,
\]
and use the abbreviations
\[
\alpha_N=m_{2,N},
\qquad
\beta_N=m_{4,N},
\qquad
\gamma_N=m_{6,N},
\qquad
\delta_N=m_{8,N}.
\]
Thus \(\lambda_N=\alpha_N\), and
\[
\alpha_N\longrightarrow\frac13,
\qquad
\beta_N\longrightarrow\frac15,
\qquad
\gamma_N\longrightarrow\frac17,
\qquad
\delta_N\longrightarrow\frac19.
\]
Consequently, for all sufficiently large \(N\),
\begin{equation}
\label{eq:moment-uniform-bounds}
0<c\le
\alpha_N,\beta_N,\gamma_N,\delta_N
\le C,
\end{equation}
and
\begin{equation}
\label{eq:moment-determinants}
\beta_N-\alpha_N^2\ge c,
\qquad
\alpha_N\gamma_N-\beta_N^2\ge c.
\end{equation}

We shall repeatedly use the following two properties about the 
covariances.

\begin{lemma}
\label{lem:uniform-covariance-decay}
For every fixed integer \(j\ge0\), the following statements hold.

\begin{enumerate}
\item For every \(A<\infty\),
\[
\sup_{|\tau|\le A}
\left|
r_N^{(j)}(\tau)-r^{(j)}(\tau)
\right|
\longrightarrow0
\qquad\text{as }N\to\infty.
\]

\item There exists \(C_j<\infty\) such that, uniformly in \(N\) and
\(|\tau|\le T_N\),
\[
|r_N^{(j)}(\tau)|
\le
C_j\chi(\tau),
\]
where
\[
\chi(\tau)
=
\mathbf 1_{\{|\tau|\le1\}}
+
\frac1{|\tau|}
\mathbf 1_{\{|\tau|>1\}}.
\]
\end{enumerate}
\end{lemma}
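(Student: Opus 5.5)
We treat $r_N^{(j)}$ as a Riemann sum for $r^{(j)}$ and use a summation-by-parts (Dirichlet kernel) bound for the decay. Differentiating term by term,
\[
r_N^{(j)}(\tau)=\frac1N\sum_{n=1}^N\left(\frac nN\right)^j\cos^{(j)}\!\left(\frac{n\tau}{N}\right),
\qquad
r^{(j)}(\tau)=\int_0^1\lambda^j\cos^{(j)}(\lambda\tau)\,d\lambda,
\]
where $\cos^{(j)}$ is $\pm\cos$ or $\pm\sin$ according to $j \bmod 4$. The second formula follows by differentiating under the integral in $r(\tau)=\int_0^1\cos(\lambda\tau)\,d\lambda$.

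\emph{Part 1.} Set $g_\tau(\lambda)=\lambda^j\cos^{(j)}(\lambda\tau)$ for $|\tau|\le A$. Then $r_N^{(j)}(\tau)$ is the right-endpoint Riemann sum of $g_\tau$ on $[0,1]$ with mesh $1/N$. The standard estimate gives
\[
\left|\frac1N\sum_{n=1}^N g_\tau(n/N)-\int_0^1 g_\tau\right|\le \frac{\sup_{[0,1]}|g_\tau'|}{N}\le \frac{j+A}{N}.
\]
This bound is uniform in $|\tau|\le A$, which proves part 1.

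\emph{Part 2.} For $|\tau|\le1$ the trivial bound $|r_N^{(j)}(\tau)|\le\frac1N\sum_n (n/N)^j\le1$ suffices. For $1<|\tau|\le T_N=N\pi$, put $\theta=\tau/N$, so $0<|\theta|\le\pi$. Write $\cos^{(j)}(n\theta)$ as the real or imaginary part of $\pm e^{in\theta}$, and let $E_n(\theta)=\sum_{m=1}^n e^{im\theta}$. Abel summation gives
\[
\sum_{n=1}^N w_n e^{in\theta}=w_N E_N(\theta)-\sum_{n=1}^{N-1}(w_{n+1}-w_n)E_n(\theta),
\qquad w_n=(n/N)^j .
\]
The weights $w_n$ are monotone and lie in $[0,1]$, so the total variation of $(w_n)$ is at most $1$. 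Combined with the Dirichlet bound $|E_n(\theta)|\le 1/|\sin(\theta/2)|$, this yields
\[
\left|\sum_{n=1}^N w_n e^{in\theta}\right|\le \frac{2}{|\sin(\theta/2)|}.
\]
For $|\theta|\le\pi$ we have $|\sin(\theta/2)|\ge|\theta|/\pi$, hence
\[
|r_N^{(j)}(\tau)|\le \frac{2\pi}{N|\theta|}=\frac{2\pi}{|\tau|}.
\]
Combining the two regimes gives $|r_N^{(j)}(\tau)|\le C_j\chi(\tau)$ with $C_j=2\pi$, in fact independent of $j$.

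The only delicate point is the regime $|\tau|$ close to $T_N$, where $\theta$ is near $\pi$. There one needs the restriction $|\tau|\le T_N$ so that $\sin(\theta/2)$ stays away from zero. Beyond $T_N$ the polynomial is periodic and $r_N$ returns to size $1$ near $\tau=2N\pi$, so the restriction is essential. The lower bound $|\sin(\theta/2)|\ge|\theta|/\pi$ on $|\theta|\le\pi$ handles this regime uniformly, so no further obstacle is expected.
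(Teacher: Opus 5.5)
Your proposal is correct and follows essentially the same route as the paper: Riemann-sum convergence, uniform on compacts, for part 1, and Abel summation against the Dirichlet kernel with $|\sin(\theta/2)|\ge|\theta|/\pi$ on $|\theta|\le\pi$ for part 2. You also make explicit the constants the paper leaves implicit: an $O((j+A)/N)$ rate in part 1, and $C_j=2\pi$ independent of $j$ in part 2.
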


\begin{proof}
The first assertion follows directly from the uniform convergence of the corresponding
Riemann sums on compact sets. Indeed,
\[
r_N^{(j)}(\tau)
=
\operatorname{Re}\left[
i^j\frac1N
\sum_{n=1}^N
\left(\frac nN\right)^j
e^{in\tau/N}
\right],
\]
whereas
\(r^{(j)}(\tau)
=
\operatorname{Re}\left[
i^j
\int_0^1
\lambda^j e^{i\lambda\tau}\,d\lambda
\right].
\)

For the second assertion, the estimate is immediate when \(|\tau|\le1\),
since
\[
|r_N^{(j)}(\tau)|
\le
\frac1N
\sum_{n=1}^N
\left(\frac nN\right)^j
\le1.
\]
Note that \(1<|\tau|\le T_N\)
and set \(x=\tau/N\). Let \(w_n=(n/N)^j\), then
Abel summation gives
\[
\left|
\sum_{n=1}^Nw_ne^{inx}
\right|
\le
\sup_{1\le k\le N}
\left|
\sum_{n=1}^ke^{inx}
\right|
\left(
|w_N|
+
\sum_{n=1}^{N-1}|w_{n+1}-w_n|
\right).
\]
Note, 
for \(|x|\le\pi\),
\[
\sup_{1\le k\le N}
\left|
\sum_{n=1}^ke^{inx}
\right|
\le
\frac{C_j}{|\sin(x/2)|}
\le
\frac{C_jN}{|\tau|}.
\]
And the second factor is at most 2.
After division by \(N\), this proves
\(|r_N^{(j)}(\tau)|\le C_j/|\tau|\).
\end{proof}

We next record two covariance matrices used below, together with bounds
that follow from the preceding covariance and spectral-moment estimates.

The cross-covariance matrix of \(\boldsymbol{U}_N\) is
\begin{equation}
\label{eq:normalized-covariance-matrix}
\mathcal R_N(\tau)
:=
\E\bigl[\boldsymbol{U}_N(0)\boldsymbol{U}_N(\tau)^\top\bigr]
=
\begin{pmatrix}
r_N(\tau)
&
r_N'(\tau)/\sqrt{\lambda_N}
\\[1mm]
-r_N'(\tau)/\sqrt{\lambda_N}
&
-r_N''(\tau)/\lambda_N
\end{pmatrix}.
\end{equation}
It follows from Lemma~\ref{lem:uniform-covariance-decay} and
\eqref{eq:moment-uniform-bounds} that
\begin{equation}
\label{Rmatrix_bound}
\|\mathcal R_N(\tau)\|_{\mathrm{op}}
\le
C\chi(\tau),
\qquad
|\tau|\le T_N.
\end{equation}

The covariance of the value and its first two derivatives at a
single time is
\begin{equation}
 \label{eq:three-jet-covariance}
 \operatorname{Cov}(Y_N(t),Y_N'(t),Y_N''(t))
 =\begin{pmatrix}
 1&0&-\lambda_N\\
 0&\lambda_N&0\\
 -\lambda_N&0&\beta_N
 \end{pmatrix}.
\end{equation}
Its eigenvalues belong to a fixed interval \([c,C]\subset(0,\infty)\)
for all sufficiently large \(N\), by
\eqref{eq:moment-uniform-bounds}--\eqref{eq:moment-determinants}.
The lower bound will also be used after conditioning on the values and
derivatives at two other times.

\subsection{Wiener chaos and Ornstein--Uhlenbeck regularization}

Let \(\mathfrak H = \ell^2(\mathbb N)\oplus\ell^2(\mathbb N)\),
and let \(W=\{W(h):h\in\mathfrak H\}\) be the isonormal Gaussian process
defined by \(W(e_n^{(1)})=a_n\) and \(W(e_n^{(2)})=b_n\),
where \(\{e_n^{(1)},e_n^{(2)};n\ge1\}\)
is the canonical orthonormal basis of \(\mathfrak H\).

Define
\[
e_{N,t}
=
\frac1{\sqrt N}
\sum_{n=1}^N
\left[
\sin\left(\frac{nt}{N}\right)e_n^{(1)}
+
\cos\left(\frac{nt}{N}\right)e_n^{(2)}
\right]
\]
and
\[
f_{N,t}
=
\frac1{\sqrt{N\lambda_N}}
\sum_{n=1}^N
\frac nN
\left[
\cos\left(\frac{nt}{N}\right)e_n^{(1)}
-
\sin\left(\frac{nt}{N}\right)e_n^{(2)}
\right].
\]
Then
\begin{equation*}
Y_N(t)=W(e_{N,t}),
\qquad
V_N(t)=W(f_{N,t}),
\end{equation*}
and
\[
\|e_{N,t}\|_{\mathfrak H}
=
\|f_{N,t}\|_{\mathfrak H}
=
1,
\qquad
\langle e_{N,t},f_{N,t}\rangle_{\mathfrak H}=0.
\]

All vectors used for a fixed \(N\) belong to the finite-dimensional
subspace
\[
 \mathfrak H_N:=\operatorname{span}
 \{e_n^{(1)},e_n^{(2)}:1\le n\le N\}\simeq\mathbb R^{2N}.
\]
Direct differentiation gives
\begin{equation}
 \label{eq:all-derivative-bounds}
 \|e_{N,t}^{(k)}\|_{\mathfrak H}^2
 =\frac1N\sum_{n=1}^N(n/N)^{2k}\le1,
 \qquad k\ge0,\quad t\in\mathbb R.
\end{equation}

Let \(H_q\) denote the Hermite polynomial, normalized by
\[
\E[H_p(Z)H_q(Z)]
=
q!\,\mathbf1_{\{p=q\}},
\qquad
Z\sim\mathcal N(0,1).
\]
Write $\varphi$ for the standard normal density and $\varphi_\eta$
for the density of $\mathcal N(0,\eta)$ when $\eta>0$.
The Hermite coefficients of \(|\cdot|\) are
\[
a_{2k}
:=
\frac1{(2k)!}
\E\bigl[|Z|H_{2k}(Z)\bigr],
\qquad
k\ge0,
\]
whereas all odd coefficients vanish. We also set
\[
b_k
=
\varphi(0)\frac{H_k(0)}{k!},
\qquad
k\ge0.
\]
Thus \(b_k=0\) for odd \(k\).

For \(q\ge0\), define
\begin{equation*}
f_q(x,y)
=
\sum_{k=0}^{\lfloor q/2\rfloor}
b_{q-2k}a_{2k}
H_{q-2k}(x)H_{2k}(y).
\end{equation*}
In the distributional Hermite sense,
\begin{equation}
\label{eq:Kac-Hermite-expansion}
\delta_0(x)|y|
=
\sum_{q\ge0}f_q(x,y).
\end{equation}
At level zero, \(f_q=0\) for every odd \(q\),
and
\(f_0(x,y)=b_0a_0=1/\pi\).

For a compact interval \(I\), let \(\mathcal F_N(I) = \mathcal Z_N(I)-\E\mathcal Z_N(I)\)
and, for \(q\ge1\), define
\begin{equation*}
J_{q,N}(I)
=
\sqrt{\lambda_N}
\int_I
f_q\bigl(\boldsymbol{U}_N(t)\bigr)\,dt.
\end{equation*}
The Hermite expansion for level crossings in
\citet[Proposition~1]{KratzLeon1997}
\citep[see also][Section~3, Eqs.~(3.3)--(3.7)]{AzaisLeon2013} gives
\begin{equation*}
\mathcal F_N(I)
=
\sum_{\substack{q\ge2\\q\ \mathrm{even}}}
J_{q,N}(I)
\qquad
\text{in }L^2(\Omega).
\end{equation*}
In particular,
\begin{equation*}
S_N
=
\sum_{\substack{q\ge2\\q\ \mathrm{even}}}
I_{q,N},
\qquad
I_{q,N}
=
\frac1{\sqrt{T_N}}
J_{q,N}([0,T_N]).
\end{equation*}

Next we introduce the regularized Kac density. For
\(0<\rho<1\), write
$$s_\rho=1-\rho^2$$
and define
\begin{equation*}
B_\rho(x)
=
P_\rho\delta_0(x)
=
\frac1{\sqrt{2\pi s_\rho}}
\exp\left(
-\frac{\rho^2x^2}{2s_\rho}
\right),
\end{equation*}
where the equality is understood in the distributional sense, and
\begin{equation}
\label{eq:A-rho-definition}
A_\rho(y)
=
P_\rho|\cdot|(y)
=
\E\left|
\rho y+\sqrt{s_\rho}Z
\right|,
\qquad
Z\sim\mathcal N(0,1).
\end{equation}
Set
\begin{equation*}
F_\rho(x,y)
=
B_\rho(x)A_\rho(y)-\frac1\pi.
\end{equation*}
Since the standard Gaussian measure is invariant under the
Ornstein--Uhlenbeck semigroup,
\[
\E[B_\rho(Z_1)A_\rho(Z_2)]
=
\varphi(0)\E|Z_2|
=
\frac1\pi
\]
for independent standard Gaussian variables \(Z_1,Z_2\). Hence
\(\E[F_\rho]=0\).

For a compact interval \(I\), define the uncentered regularized zero count
\begin{equation*}
\mathcal Z_{N,\rho}(I)
=
\sqrt{\lambda_N}
\int_I
B_\rho(Y_N(t))
A_\rho(V_N(t))
\,dt,
\end{equation*}
and its centered version
\begin{equation*}
\mathcal F_{N,\rho}(I)
=
\mathcal Z_{N,\rho}(I)
-
\E\mathcal Z_{N,\rho}(I)
=
\sqrt{\lambda_N}
\int_I
F_\rho(\boldsymbol{U}_N(t))\,dt.
\end{equation*}
Notice that
\begin{equation*}
\E\mathcal Z_{N,\rho}(I)
=
\frac{\sqrt{\lambda_N}}{\pi}|I|
=
\E\mathcal Z_N(I).
\end{equation*}

Define
\begin{equation}
\label{eq:S-N-rho-definition}
S_{N,\rho}
=
\frac1{\sqrt{T_N}}
\mathcal F_{N,\rho}([0,T_N]).
\end{equation}
The following lemma gives the Wiener chaos expansions of
\(\mathcal F_{N,\rho}(I)\) and \(S_{N,\rho}\).
\begin{lemma}[Chaos representation of the regularized count]
\label{lem:regularized-chaos-representation}
For every compact interval \(I\),
\begin{equation}
\label{eq:regularized-block-chaos}
\mathcal F_{N,\rho}(I)
=
\sum_{\substack{q\ge2\\q\ \mathrm{even}}}
\rho^qJ_{q,N}(I)
\qquad
\text{in }L^2(\Omega).
\end{equation}
In particular,
\begin{equation}
\label{eq:S-N-rho-chaos}
S_{N,\rho}
=
\sum_{\substack{q\ge2\\q\ \mathrm{even}}}
\rho^qI_{q,N}.
\end{equation}

Moreover, the function
\begin{equation}
\label{eq:G-rho-definition}
G_\rho
:=
\sum_{\substack{q\ge2\\q\ \mathrm{even}}}
\frac{\rho^q}{q}f_q
\end{equation}
belongs to \(L^2(\gamma_2)\) and satisfies
\begin{equation}
\label{eq:G-rho-integral}
G_\rho
=
\int_0^\rho\frac{F_t}{t}\,dt
\qquad
\text{in }L^2(\gamma_2).
\end{equation}
\end{lemma}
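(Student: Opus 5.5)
The plan is to work entirely at the level of the two-variable local functional, establish a Hermite expansion for \(F_\rho\), and then integrate in time. The key identity is the Mehler action of the two-dimensional Ornstein--Uhlenbeck semigroup on products of Hermite polynomials:
\[
P_\rho^{(2)}\bigl(H_m\otimes H_n\bigr)=\rho^{m+n}H_m\otimes H_n .
\]

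First I compute the Hermite coefficients of \(B_\rho\) and \(A_\rho\) directly. For \(B_\rho\), Mehler's formula gives
\[
\E\bigl[B_\rho(Z)H_k(Z)\bigr]=\rho^k\varphi(0)H_k(0),
\]
so \(B_\rho=\sum_k\rho^kb_kH_k\). One way to see this is to compute \(\int B_\rho(x)H_k(x)\varphi(x)\,dx\) via the generating function of Hermite polynomials, using \(B_\rho=P_\rho\delta_0\) and the self-adjointness of \(P_\rho\). The expansion converges in \(L^2(\gamma)\) because \(\sum_k\rho^{2k}b_k^2k!<\infty\); indeed \(b_k^2k!=\varphi(0)^2H_k(0)^2/k!\) grows at most polynomially. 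Similarly, since \(A_\rho=P_\rho|\cdot|\) with \(|\cdot|\in L^2(\gamma)\), we get \(A_\rho=\sum_k\rho^{2k}a_{2k}H_{2k}\). Taking the product, \(B_\rho(x)A_\rho(y)\in L^2(\gamma_2)\) has coefficient \(\rho^{q}b_{q-2k}a_{2k}\) on \(H_{q-2k}\otimes H_{2k}\). Hence
\[
F_\rho=\sum_{q\ge2,\ q\ \mathrm{even}}\rho^qf_q
\]
in \(L^2(\gamma_2)\), where the \(q=0\) term \(1/\pi\) cancels and odd \(q\) vanish.

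Next I transfer this to the process. Because \(\boldsymbol U_N(t)\) is an orthonormal pair \(\bigl(W(e_{N,t}),W(f_{N,t})\bigr)\), each \(H_m(Y_N(t))H_n(V_N(t))\) is an element of the \((m+n)\)-th Wiener chaos. Its norm is
\[
\|H_m(Y_N(t))H_n(V_N(t))\|_{L^2(\Omega)}^2=m!\,n!,
\]
uniformly in \(t\). So for each \(t\) the series \(F_\rho(\boldsymbol U_N(t))=\sum_q\rho^qf_q(\boldsymbol U_N(t))\) converges in \(L^2(\Omega)\), with tail norm \(\bigl(\sum_{q>Q}\rho^{2q}\|f_q\|_{L^2(\gamma_2)}^2\bigr)^{1/2}\) independent of \(t\). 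Minkowski's integral inequality over the compact interval \(I\) then lets me interchange the series with \(\sqrt{\lambda_N}\int_I\cdot\,dt\), giving \eqref{eq:regularized-block-chaos}. Dividing by \(\sqrt{T_N}\) with \(I=[0,T_N]\) gives \eqref{eq:S-N-rho-chaos}.

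For \(G_\rho\), I note that \(\|f_q\|^2_{L^2(\gamma_2)}=\sum_kb_{q-2k}^2a_{2k}^2(q-2k)!(2k)!\) grows at most polynomially in \(q\); more simply, \(\sum_q\rho^{2q}\|f_q\|^2=\|F_\rho\|^2+\pi^{-2}<\infty\) for every \(\rho<1\). Then \(\sum_q\rho^{2q}q^{-2}\|f_q\|^2<\infty\), so \(G_\rho\in L^2(\gamma_2)\). For \eqref{eq:G-rho-integral}, I write
\[
\frac{F_t}{t}=\sum_{q\ge2,\ q\ \mathrm{even}}t^{q-1}f_q .
\]
Its \(L^2\) norm is at most \(t\,\|F_{\rho}\|/\rho^2\) for \(t\le\rho\), since \(q\ge2\). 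This makes \(t\mapsto F_t/t\) Bochner integrable on \((0,\rho]\). Integrating termwise, justified by orthogonality and dominated convergence on the coefficients, yields \(\int_0^\rho t^{q-1}\,dt=\rho^q/q\), which is the claim.

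The main obstacle is the rigorous identification of the coefficients of the singular \(B_\rho=P_\rho\delta_0\), which avoids the purely distributional expansion \eqref{eq:Kac-Hermite-expansion}. I would handle it by the explicit Gaussian integral \(\int B_\rho H_k\varphi\,dx=\rho^k\varphi(0)H_k(0)\), which follows from the generating function \(\sum_k H_k(x)s^k/k!=e^{sx-s^2/2}\) and a Gaussian computation. The remaining steps are standard \(L^2\) bookkeeping.
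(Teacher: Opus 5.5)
Your proposal is correct and follows essentially the paper's route: expand \(F_\rho\) in \(L^2(\gamma_2)\) with coefficients \(\rho^q f_q\), transfer to the process via the isometry \(f\mapsto f(\boldsymbol U_N(t))\) and Minkowski's inequality in time, and obtain \(G_\rho\) by integrating \(t^{q-1}\) termwise. The only difference is that you check the Hermite coefficients of \(B_\rho=P_\rho\delta_0\) by an explicit Gaussian integral instead of appealing to the distributional expansion \eqref{eq:Kac-Hermite-expansion}. This makes the first step more rigorous rather than changing the argument.
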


\begin{proof}
The Ornstein--Uhlenbeck semigroup multiplies the component of total Hermite
degree \(q\) by \(\rho^q\). Thus, by
\eqref{eq:Kac-Hermite-expansion},
\[
B_\rho(x)A_\rho(y)
=
\sum_{\substack{q\ge0\\q\ \mathrm{even}}}
\rho^qf_q(x,y)
\]
in \(L^2(\gamma_2)\). Subtracting the zeroth chaos yields
\[
F_\rho
=
\sum_{\substack{q\ge2\\q\ \mathrm{even}}}
\rho^qf_q.
\]

Let $R_Q
=
F_\rho
-
\sum_{\substack{2\le q\le Q\\q\ \mathrm{even}}}
\rho^qf_q.$ Since \(\boldsymbol{U}_N(t)\sim\gamma_2\), Minkowski's inequality gives
\[
\left\|
\sqrt{\lambda_N}
\int_I
R_Q(\boldsymbol{U}_N(t))\,dt
\right\|_2
\le
\sqrt{\lambda_N}|I|
\|R_Q\|_{L^2(\gamma_2)}
\longrightarrow0.
\]
This proves \eqref{eq:regularized-block-chaos} and
\eqref{eq:S-N-rho-chaos}.

The series in \eqref{eq:G-rho-definition} converges in \(L^2(\gamma_2)\),
since
\[
\sum_{\substack{q\ge2\\q\ \mathrm{even}}}
\frac{\rho^{2q}}{q^2}
\|f_q\|_{L^2(\gamma_2)}^2
\le
\|F_\rho\|_{L^2(\gamma_2)}^2.
\]
Moreover, ${\rho^q}/{q}
=
\int_0^\rho t^{q-1}\,dt,$ which proves \eqref{eq:G-rho-integral} by termwise integration in
\(L^2(\gamma_2)\).
\end{proof}

Since \(P_\rho\) multiplies the \(q\)-th Wiener chaos by
\(\rho^q\), the preceding lemma also shows that
\begin{equation}
 \label{eq:semigroup-on-count}
 S_{N,\rho}=P_\rho S_N.
\end{equation}
For each fixed \(N\), orthogonality gives
\begin{equation}
 \label{eq:fixed-N-OU-limit}
 \begin{aligned}
 \|S_N-S_{N,\rho}\|_2^2
 &=\sum_{\substack{q\ge2\\q\ \mathrm{even}}}
 (1-\rho^q)^2\|I_{q,N}\|_2^2
 \longrightarrow0
 \qquad\text{as }\rho\uparrow1.
 \end{aligned}
\end{equation}
Indeed, the summands are bounded by \(\|I_{q,N}\|_2^2\), whose sum
is \(v_N^2<\infty\). If \(\sigma_{N,\rho}^2=\Var(S_{N,\rho})\), the reverse triangle
inequality in \(L^2\) consequently yields
\begin{equation}
 \label{eq:variance-stability-fixed-N}
 |\sigma_{N,\rho}-v_N|
 \le\|S_{N,\rho}-S_N\|_2\longrightarrow0.
\end{equation}
Only this fixed-\(N\) convergence will be required when the
regularization is removed.

\subsection{The Malliavin--Stein identity and quantitative reduction}

We use \(D\), \(L\), and \(L^{-1}\) for the Malliavin derivative, the
Ornstein--Uhlenbeck generator, and its pseudo-inverse associated with the
isonormal process \(W\); see, for instance,
\citet{Nualart2006,NualartNualart2018,NourdinPeccati2012}
for background. We write \(\mathbb D^{1,2}\) for the Gaussian Sobolev
space of square-integrable random variables with square-integrable
\(\mathfrak H\)-valued Malliavin derivatives.

For a \(1\)-Lipschitz function \(h\), let \(f_h\) be the bounded solution of
\[
 f_h'(x)-xf_h(x)=h(x)-\E h(Z),\qquad Z\sim\mathcal N(0,1).
\]
The standard bounds for this solution are
\begin{equation}
 \label{eq:Stein-solution-bounds}
 \|f_h'\|_\infty\le1,\qquad \operatorname{Lip}(f_h')\le2;
\end{equation}
see~\citet[Lemma~1.2(v)]{NourdinPeccati2009}.
If \(F\in\mathbb D^{1,2}\) is centered, \(\sigma^2:=\Var(F)>0\), and \(\Gamma_F:=\langle DF,-DL^{-1}F\rangle_{\mathfrak H}\), then Malliavin
integration by parts gives
\begin{equation}
 \label{eq:Malliavin-IBP}
 \E\Gamma_F=\sigma^2,\qquad
 \E[(F/\sigma)f_h(F/\sigma)]
 =\sigma^{-2}\E[\Gamma_F f_h'(F/\sigma)].
\end{equation}
Consequently,
\begin{equation}
 \label{eq:direct-Stein-covariance}
 \E h(F/\sigma)-\E h(Z)
 =-\sigma^{-2}\Cov(\Gamma_F,f_h'(F/\sigma)).
\end{equation}
The identity is a direct consequence of
\citet[Eq.~(1.23)]{NourdinPeccati2009}. It can first be applied to smooth
Lipschitz test functions and then extended by approximation.

Applying the Cauchy--Schwarz inequality to
\eqref{eq:direct-Stein-covariance} gives
\[
 d_{\mathrm W}(F/\sigma,Z)
 \le\frac{\sqrt{\Var(\Gamma_F)}}{\sigma^2},
\]
whenever this variance is finite. Below we instead estimate the
covariance directly. We call \(\Gamma_F\) the Malliavin--Stein
factor. Its conditional expectation given \(F\) is a
one-dimensional Stein kernel.

The proof of Theorem~\ref{thm:main} is reduced to the following
proposition. We first identify the local kernel in
Section~\ref{sec:local-kernel}, control changes of the count after
replacing its Gaussian coordinates in Section~\ref{sec:replacement},
and combine these estimates in Section~\ref{sec:malliavin-stein}.

\begin{proposition}[Normal approximation of the regularized count]
\label{prop:regularized-clt}
For every \(c_0>0\), there exist constants \(C(c_0)<\infty\) and
\(N_0\ge1\) such that, for every \(N\ge N_0\) and
\(\rho\in[1/2,1)\) satisfying \(\sigma_{N,\rho}^2=\Var(S_{N,\rho})\ge c_0\),
one has
\begin{equation}
 \label{eq:regularized-CLT-bound}
 d_{\mathrm W}\left(S_{N,\rho}/\sigma_{N,\rho},Z\right)
 \le C(c_0)\frac{(\log N)^2}{\sqrt N}.
\end{equation}
\end{proposition}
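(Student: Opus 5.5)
Starting from the identity \eqref{eq:direct-Stein-covariance} with \(F=S_{N,\rho}\), and using \(\sigma_{N,\rho}^2\ge c_0\), it suffices to show
\[
 \bigl|\Cov\bigl(\Gamma_{N,\rho},g(S_{N,\rho})\bigr)\bigr|\le C\frac{(\log N)^2}{\sqrt N}
\]
uniformly over functions \(g=f_h'(\cdot/\sigma_{N,\rho})\). Each such \(g\) satisfies \(\|g\|_\infty\le1\) and \(\operatorname{Lip}(g)\le 2/\sqrt{c_0}\) by \eqref{eq:Stein-solution-bounds}.

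The first step computes \(\Gamma_{N,\rho}\) explicitly. By Lemma~\ref{lem:regularized-chaos-representation}, \(-L^{-1}S_{N,\rho}=T_N^{-1/2}\sqrt{\lambda_N}\int_0^{T_N}G_\rho(\boldsymbol U_N(b))\,db\). Here \(G_\rho=\int_0^\rho F_t\,dt/t\) has explicit Gaussian-kernel form, so \(D\) applied to it gives \(\nabla G_\rho(\boldsymbol U_N(b))\cdot(e_{N,b},f_{N,b})\). Likewise \(DS_{N,\rho}\) is the analogous expression with \(\nabla F_\rho\). The inner product in \(\mathfrak H\) of \((e_{N,a},f_{N,a})\) with \((e_{N,b},f_{N,b})\) is the matrix \(\mathcal R_N(b-a)\), so
\[
\Gamma_{N,\rho}=\frac{\lambda_N}{T_N}\iint \nabla F_\rho(\boldsymbol U_N(a))^\top\mathcal R_N(b-a)\nabla G_\rho(\boldsymbol U_N(b))\,da\,db.
\]
The factor \(\partial_xB_\rho\) blows up like \(s_\rho^{-1}\) as \(\rho\uparrow1\). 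To avoid this, I integrate by parts in \(a\), using \(\partial_a B_\rho(Y_N(a))=B_\rho'(Y_N)Y_N'\), which is possible since \(V_N\) is a multiple of \(Y_N'\). This trades the \(x\)-derivative for a time derivative. The result is an endpoint term plus a double integral of local kernels \(K_{N,\rho}(a,b)\). Each kernel is a function of \((Y_N(a),V_N(a),Y_N'' (a),Y_N(b),V_N(b))\), i.e. at most five Gaussian coordinates, multiplied by entries of \(\mathcal R_N\) and their derivatives, which are \(O(\chi(b-a))\) by Lemma~\ref{lem:uniform-covariance-decay}. I then bound the weighted first moments: I aim for \(\E[(1+|\xi|)^2|K_{N,\rho}(a,b)|]\le C\chi(b-a)\) uniformly in \(\rho\in[1/2,1)\). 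Only \(L^1\) norms of \(B_\rho\), which are uniformly bounded, should enter, together with the nondegeneracy of \eqref{eq:three-jet-covariance} after conditioning.

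The second step handles the covariance of each kernel with \(g(S_{N,\rho})\). Let \(\xi\) be the five coordinates, orthonormalized, and let \(\tilde S\) be \(S_{N,\rho}\) computed after resampling \(\xi\) by an independent copy. Then \(K\) and \(\tilde S\) are independent, so
\[
|\Cov(K,g(S_{N,\rho}))|\le \operatorname{Lip}(g)\,\E\bigl[|K|\,|S_{N,\rho}-\tilde S|\bigr].
\]
Conditionally on \(\xi=x\) and the copy \(=y\), by Mehler's formula \(S_{N,\rho}\) equals the average over the perturbed field of the conditional zero count, divided by \(\sqrt{T_N}\). The difference is therefore bounded by \(T_N^{-1/2}\E|\mathcal Z_N^x-\mathcal Z_N^y|\) along the corresponding perturbed fields. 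The key input is the replacement estimate \eqref{eq:intro-replacement}, which gives \(C(1+|x|+|y|)^2\log T_N\). Combining with the weighted moment bound yields a contribution \(C\chi(b-a)\log N/\sqrt{T_N}\) per pair \((a,b)\). Integrating gives \(T_N^{-1}\iint\chi(b-a)\,da\,db\le C\log T_N\), so the total is \(C(\log N)^2/\sqrt N\). The endpoint term is treated in the same way with a single integral and gives a smaller contribution.

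The main obstacle is proving \eqref{eq:intro-replacement} uniformly. I would interpolate linearly, \(f_\theta=(1-\theta)Y^x+\theta Y^y\), and note that interior zero counts change only at pairs \((t,\theta)\) where \(f_\theta(t)=f_\theta'(t)=0\). I would count these pairs with a two-dimensional Kac--Rice formula in \((t,\theta)\). The \(\theta\)-derivative of \(f_\theta\) is the perturbation \((y-x)\cdot\)(regression functions), which decays like \(\chi(t-a)+\chi(t-b)\); integrating this over \(t\) produces the \(\log T_N\). Endpoint crossings are handled separately. Near \(a\) and \(b\) the conditional covariance of \((f,f',f'')\) degenerates, so there I would instead bound both local zero counts in windows of size \(O(1)\) directly, by Jensen's formula applied to the analytic extension. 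This gives \(\E\) of the count at most \(C(1+|x|+|y|)\) up to logarithmic moments.
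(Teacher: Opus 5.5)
Your proposal is correct and follows essentially the same route as the paper. The steps match: time integration by parts yields a local kernel in $Y_N(a),Y_N'(a),Y_N''(a),Y_N(b),Y_N'(b)$ with weighted first moment $C\chi(b-a)$; those coordinates are resampled; a common-noise Mehler coupling transfers the exact-count replacement bound to $S_{N,\rho}$; that bound comes from the linear-interpolation Kac--Rice argument together with Jensen's formula; and the two logarithms arise exactly where you place them.

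Two points your sketch leaves implicit are where the paper does real work. First, the identities $A_\rho''=2\rho^2B_\rho$ and $A_\rho-yA_\rho'=2s_\rho B_\rho$ turn the integration-by-parts remainder into $B_\rho(Y_N(a))B_\rho(V_N(a))\{Y_N''(a)+\lambda_NY_N(a)\}$, with no $s_\rho^{-1}$ blow-up. Second, the decay of the regression functions must hold uniformly as $b\to a$ (Lemma~\ref{lem:projection-tail}); the paper obtains this with divided-difference coordinates, because the raw two-site Gram matrix degenerates there.
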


To explain the reduction, let \(v_*>0\) be such that
\(v_N\ge v_*\) for all sufficiently large \(N\), as follows from
\eqref{eq:known-variance-asymptotics}. For each such fixed \(N\),
\eqref{eq:variance-stability-fixed-N} implies \(\sigma_{N,\rho}\ge v_*/2\) when \(\rho\) is sufficiently close
to one. The proposition can then be applied with \(c_0=v_*^2/4\). Its constant does not depend on how close \(\rho\)
must be to one. Consequently, the fixed-\(N\) limit
\eqref{eq:fixed-N-OU-limit} suffices to recover the exact zero count.
The normalization and the passage to the limit are carried out in
Subsection~\ref{subsec:proof-main}.

\section{The local Malliavin--Stein kernel}
\label{sec:local-kernel}

The identity \eqref{eq:direct-Stein-covariance} asks for the
covariance of \(\Gamma_{N,\rho}\) with a function of the regularized
count. We first write the Stein factor as a two-time integral. A
direct bound would involve the derivative of \(B_\rho\), which becomes
concentrated as \(\rho\uparrow1\). Time integration by parts removes this derivative from the
integral term and yields a local kernel containing a product
of two \(B_\rho\) factors.
The variables in that kernel will determine the replacement space in
Section~\ref{sec:replacement}.

\subsection{Integral representation of the Malliavin--Stein factor}\label{subsec:kernel-representation}

By Lemma~\ref{lem:regularized-chaos-representation} and the fact
that \(-L^{-1}\) acts on the \(q\)-th Wiener chaos by multiplication
by \(1/q\), we have
\begin{equation}
\label{eq:L-inverse-representation}
-L^{-1}S_{N,\rho}
=
\frac{\sqrt{\lambda_N}}{\sqrt{T_N}}
\int_0^{T_N}
G_\rho(\boldsymbol{U}_N(t))\,dt.
\end{equation}
Define
\[
 \Gamma_{N,\rho}=\left\langle DS_{N,\rho},
 -DL^{-1}S_{N,\rho}\right\rangle_{\mathfrak H}.
\]
For fixed \(\rho\in[1/2,1)\) and \(H=F_\rho,G_\rho\),
we have \(H(\boldsymbol{U}_N(t))\in\mathbb D^{1,2}\);
the required regularity is verified in
Appendix~\ref{app:soft-kernel-details}.
The Malliavin chain rule gives
\[
D[H(\boldsymbol{U}_N(t))]
=
\partial_1H(\boldsymbol{U}_N(t))e_{N,t}
+
\partial_2H(\boldsymbol{U}_N(t))f_{N,t}.
\]
Using \eqref{eq:L-inverse-representation} and
\eqref{eq:normalized-covariance-matrix}, we obtain
\begin{equation}
 \label{eq:gamma-integral-representation}
 \Gamma_{N,\rho}=\frac{\lambda_N}{T_N}
 \int_{[0,T_N]^2}\nabla F_\rho(\boldsymbol{U}_N(a))^\top
 \mathcal R_N(b-a)\nabla G_\rho(\boldsymbol{U}_N(b))\,da\,db.
\end{equation}
For each fixed \(\rho\in[1/2,1)\), the functions
\(F_\rho\) and \(G_\rho\) are continuously differentiable
with polynomially growing gradients; see
Appendix~\ref{app:soft-kernel-details}.
Since \(\boldsymbol{U}_N(t)\) depends linearly on the finite Gaussian
coefficient vector, these gradients evaluated at \(\boldsymbol{U}_N(t)\)
have moments of every order, uniformly in \(t\in[0,T_N]\),
for fixed \(N\) and \(\rho<1\).
These bounds justify differentiation under the time
integrals, Bochner integration, and Fubini's theorem
in \(L^2\). In particular, by \eqref{eq:Malliavin-IBP}, we have
\begin{equation*}
 \E\Gamma_{N,\rho}=\Var(S_{N,\rho})=\sigma_{N,\rho}^2.
\end{equation*}

\subsection{Time integration by parts}

A direct estimate of \eqref{eq:gamma-integral-representation} would
involve \(B_\rho'\). We first remove this derivative by integrating
in time.

Define \(H_N(t)=Y_N''(t)+\lambda_NY_N(t)\).
At each fixed time, \(H_N(t)\) is independent of \(\boldsymbol{U}_N(t)\), since
both cross-covariances vanish, and \(\Var(H_N(t))=\beta_N-\lambda_N^2\asymp1\).
Set
\begin{align}
 Q_{N,\rho}(a)&=B_\rho(Y_N(a))B_\rho(V_N(a))H_N(a),
 \label{eq:Q-kernel-definition}\\*
 J_{N,\rho}(a,b)&=r_N(b-a)\partial_1G_\rho(\boldsymbol{U}_N(b))
 +\frac{r_N'(b-a)}{\sqrt{\lambda_N}}\partial_2G_\rho(\boldsymbol{U}_N(b)),
 \notag\\*
 K_{N,\rho}(a,b)&=Q_{N,\rho}(a)J_{N,\rho}(a,b).
 \label{eq:K-kernel-definition}
\end{align}

\begin{lemma}[Time integration by parts]
For every \(N\) and \(\rho\in[1/2,1)\),
\begin{equation}
 \label{eq:DS-time-IBP}
 \begin{aligned}
 DS_{N,\rho}
 &=\frac1{\sqrt{T_N}}
 \left[B_\rho(Y_N(t))A_\rho'(V_N(t))e_{N,t}\right]_{t=0}^{t=T_N}\\
 &\quad-\frac{2\rho^2}{\sqrt{\lambda_NT_N}}
 \int_0^{T_N}Q_{N,\rho}(a)e_{N,a}\,da.
 \end{aligned}
\end{equation}
Consequently,
\begin{equation}
 \label{eq:gamma-IBP-representation}
 \Gamma_{N,\rho}=\mathcal B_{N,\rho}
 -\frac{2\rho^2}{T_N}
 \int_{[0,T_N]^2}K_{N,\rho}(a,b)\,da\,db,
\end{equation}
where
\begin{equation}
 \label{eq:boundary-definition}
 \mathcal B_{N,\rho}:=\frac{\sqrt{\lambda_N}}{T_N}
 \left[B_\rho(Y_N(a))A_\rho'(V_N(a))
 \int_0^{T_N}J_{N,\rho}(a,b)\,db\right]_{a=0}^{a=T_N}.
\end{equation}
The identity for \(DS_{N,\rho}\) holds in
\(L^2(\Omega;\mathfrak H)\), and that for
\(\Gamma_{N,\rho}\) holds in \(L^2(\Omega)\).
\end{lemma}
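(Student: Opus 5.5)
The plan is to compute \(DS_{N,\rho}\) by the Malliavin chain rule and then integrate by parts in time in the term carrying \(f_{N,t}\). The key observation is that \(f_{N,t}=e_{N,t}'/\sqrt{\lambda_N}\), which follows by differentiating the definition of \(e_{N,t}\) in \(t\). Since \(\partial_1F_\rho=B_\rho'(x)A_\rho(y)\) and \(\partial_2F_\rho=B_\rho(x)A_\rho'(y)\), the chain rule (justified by the regularity in Appendix~\ref{app:soft-kernel-details}) gives
\[
DS_{N,\rho}=\frac{1}{\sqrt{T_N}}\int_0^{T_N}\Bigl[\sqrt{\lambda_N}\,B_\rho'(Y_N)A_\rho(V_N)\,e_{N,t}+B_\rho(Y_N)A_\rho'(V_N)\,e_{N,t}'\Bigr]dt .
\]
I will integrate the second term by parts, viewed as an \(\mathfrak H_N\)-valued integral with \(C^1\) dependence on \(t\). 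This produces the boundary term \(\bigl[B_\rho(Y_N)A_\rho'(V_N)e_{N,t}\bigr]_0^{T_N}\) minus \(\int_0^{T_N}\frac{d}{dt}\bigl[B_\rho(Y_N)A_\rho'(V_N)\bigr]e_{N,t}\,dt\). For the time derivative I use \(Y_N'=\sqrt{\lambda_N}V_N\) and \(V_N'=Y_N''/\sqrt{\lambda_N}\). The coefficient of \(e_{N,t}\) in the remaining integral is then
\[
\sqrt{\lambda_N}\,B_\rho'(Y_N)\bigl(A_\rho(V_N)-V_NA_\rho'(V_N)\bigr)-\frac{1}{\sqrt{\lambda_N}}B_\rho(Y_N)A_\rho''(V_N)Y_N''.
\]

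Next come three explicit identities for the kernels. Write \(W=\rho y+\sqrt{s_\rho}Z\), whose density at \(0\) is exactly \(B_\rho(y)\).
\begin{itemize}
\item From the Gaussian form of \(B_\rho\): \(B_\rho'(x)=-(\rho^2/s_\rho)\,xB_\rho(x)\).
\item \(A_\rho'(y)=\rho\,\E\operatorname{sign}(W)\), and differentiating once more gives \(A_\rho''(y)=2\rho^2B_\rho(y)\).
\item Gaussian integration by parts in \(Z\) gives
\[
A_\rho(y)-yA_\rho'(y)=\E\bigl[\operatorname{sign}(W)(W-\rho y)\bigr]=\sqrt{s_\rho}\,\E[Z\operatorname{sign}(W)]=2s_\rho B_\rho(y).
\]
\end{itemize}
Substituting these, the coefficient collapses to \(-\frac{2\rho^2}{\sqrt{\lambda_N}}B_\rho(Y_N)B_\rho(V_N)\bigl(\lambda_NY_N+Y_N''\bigr)=-\frac{2\rho^2}{\sqrt{\lambda_N}}Q_{N,\rho}\). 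This is \eqref{eq:DS-time-IBP}.

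For \(\Gamma_{N,\rho}\), I pair \eqref{eq:DS-time-IBP} with \(-DL^{-1}S_{N,\rho}=\frac{\sqrt{\lambda_N}}{\sqrt{T_N}}\int_0^{T_N}\bigl[\partial_1G_\rho e_{N,b}+\partial_2G_\rho f_{N,b}\bigr]db\), which comes from \eqref{eq:L-inverse-representation} and the chain rule. This uses \(\langle e_{N,a},e_{N,b}\rangle=r_N(b-a)\) and \(\langle e_{N,a},f_{N,b}\rangle=r_N'(b-a)/\sqrt{\lambda_N}\), read off from \eqref{eq:normalized-covariance-matrix}. Thus \(\langle e_{N,a},-DL^{-1}S_{N,\rho}\rangle=\frac{\sqrt{\lambda_N}}{\sqrt{T_N}}\int_0^{T_N}J_{N,\rho}(a,b)\,db\). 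Inserting this into the boundary and integral terms yields \(\mathcal B_{N,\rho}\) and the double integral of \(K_{N,\rho}\) with prefactor \(-2\rho^2/T_N\), since the factors \(\sqrt{\lambda_N}\) cancel.

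The main obstacle is rigor rather than algebra. The integration by parts must be valid in \(L^2(\Omega;\mathfrak H)\). Because \(\mathfrak H_N\) is finite-dimensional and all paths are trigonometric polynomials, I will perform it pathwise for each \(\omega\) and each coordinate, where it is classical calculus for \(C^1\) functions of \(t\). I then lift it to \(L^2\) using the moment bounds of every order for \(B_\rho\), \(A_\rho'\), \(Q_{N,\rho}\) and \(\nabla G_\rho\) at fixed \(N\) and \(\rho<1\), together with \eqref{eq:all-derivative-bounds}, Minkowski's inequality and Fubini's theorem. The sign conventions in \(\langle e_{N,a},f_{N,b}\rangle\) also need careful checking against \eqref{eq:normalized-covariance-matrix}.
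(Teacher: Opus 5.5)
Your proposal is correct and follows the paper's proof essentially step by step. You use the chain rule, then time integration by parts via \(e_{N,t}'=\sqrt{\lambda_N}f_{N,t}\), then the identities \(B_\rho'(x)=-(\rho^2x/s_\rho)B_\rho(x)\), \(A_\rho''=2\rho^2B_\rho\), \(A_\rho-yA_\rho'=2s_\rho B_\rho\), and finally pair with \(-DL^{-1}S_{N,\rho}\) using \(\langle e_{N,a},e_{N,b}\rangle=r_N(b-a)\) and \(\langle e_{N,a},f_{N,b}\rangle=r_N'(b-a)/\sqrt{\lambda_N}\). The only cosmetic difference is that you derive the \(A_\rho\) identities by Gaussian integration by parts, whereas the paper differentiates the closed form \(A_\rho(y)=\rho y\{2\Phi(\rho y/\sqrt{s_\rho})-1\}+2s_\rho B_\rho(y)\); both are valid.
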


\begin{proof}
Splitting the Gaussian integral in \eqref{eq:A-rho-definition} gives
\[
 A_\rho(y)=\rho y\{2\Phi(\rho y/\sqrt{s_\rho})-1\}+2s_\rho B_\rho(y),
\]
where \(\Phi\) is the standard normal distribution function.
Differentiating, we obtain
\begin{equation}
 \label{eq:A-exact-identities}
 A_\rho''(y)=2\rho^2B_\rho(y),\qquad
 A_\rho(y)-yA_\rho'(y)=2s_\rho B_\rho(y).
\end{equation}
Also, \(B_\rho'(x)=-(\rho^2x/s_\rho)B_\rho(x)\).
Since \(e_{N,t}'=\sqrt{\lambda_N} f_{N,t}\) and \(V_N'=Y_N''/\sqrt{\lambda_N}\),
\begin{align*}
 &\frac d{dt}\{B_\rho(Y_N(t))A_\rho'(V_N(t))e_{N,t}\}\\*
 =&\sqrt{\lambda_N} B_\rho'(Y_N(t))V_N(t)A_\rho'(V_N(t))e_{N,t}+\lambda_N^{-1/2}B_\rho(Y_N(t))A_\rho''(V_N(t))Y_N''(t)e_{N,t}\\*
 &+\sqrt{\lambda_N} B_\rho(Y_N(t))A_\rho'(V_N(t))f_{N,t}.
\end{align*}
On the other hand,
\[
 DS_{N,\rho}=\frac{\sqrt{\lambda_N}}{\sqrt{T_N}}\int_0^{T_N}
 \{B_\rho'(Y_N(t))A_\rho(V_N(t))e_{N,t}
 +B_\rho(Y_N(t))A_\rho'(V_N(t))f_{N,t}\}\,dt.
\]
Subtracting the time derivative inside this integral, the remaining
coefficient of \(e_{N,t}\), before division by \(\sqrt{T_N}\), is
\begin{align*}
 &\sqrt{\lambda_N} B_\rho'(Y_N(t))
 \{A_\rho(V_N(t))-V_N(t)A_\rho'(V_N(t))\}-\lambda_N^{-1/2}B_\rho(Y_N(t))A_\rho''(V_N(t))Y_N''(t)\\*
 &\quad=-\frac{2\rho^2}{\sqrt{\lambda_N}}
 B_\rho(Y_N(t))B_\rho(V_N(t))\{Y_N''(t)+\lambda_N Y_N(t)\},
\end{align*}
by \eqref{eq:A-exact-identities}. This proves \eqref{eq:DS-time-IBP}.

For fixed \(N\) and \(\rho<1\), the smoothness of the
sample paths and the regularized kernels justifies
the integration by parts pathwise.
The polynomial growth bounds ensure that the resulting
identity also holds in \(L^2(\Omega;\mathfrak H)\). Differentiating \eqref{eq:L-inverse-representation} and using the
Gaussian Sobolev chain rule, we obtain
\begin{align*}
-DL^{-1}S_{N,\rho}
&=
\frac{\sqrt{\lambda_N}}{\sqrt{T_N}}
\int_0^{T_N}
\Bigl\{
\partial_1G_\rho(\boldsymbol{U}_N(b))e_{N,b}
+
\partial_2G_\rho(\boldsymbol{U}_N(b))f_{N,b}
\Bigr\}\,db.
\end{align*}
Moreover,
\begin{align*}
\langle e_{N,a},e_{N,b}\rangle_{\mathfrak H}
&=r_N(b-a),\qquad
\langle e_{N,a},f_{N,b}\rangle_{\mathfrak H}
=\frac{r_N'(b-a)}{\sqrt{\lambda_N}},
\end{align*}
and hence
\begin{align*}
&\left\langle
e_{N,a},
\partial_1G_\rho(\boldsymbol{U}_N(b))e_{N,b}
+
\partial_2G_\rho(\boldsymbol{U}_N(b))f_{N,b}
\right\rangle_{\mathfrak H}\\
\qquad=&
r_N(b-a)\partial_1G_\rho(\boldsymbol{U}_N(b))
+
\frac{r_N'(b-a)}{\sqrt{\lambda_N}}
\partial_2G_\rho(\boldsymbol{U}_N(b))=
J_{N,\rho}(a,b).
\end{align*}
Taking the inner product of \eqref{eq:DS-time-IBP} with
\(-DL^{-1}S_{N,\rho}\), and using Fubini's theorem, therefore gives
\begin{align*}
\Gamma_{N,\rho}
&=
\frac{\sqrt{\lambda_N}}{T_N}
\left[
B_\rho(Y_N(a))A_\rho'(V_N(a))
\int_0^{T_N}J_{N,\rho}(a,b)\,db
\right]_{a=0}^{a=T_N}\\
&\quad
-\frac{2\rho^2}{T_N}
\int_{[0,T_N]^2}
Q_{N,\rho}(a)J_{N,\rho}(a,b)\,da\,db.
\end{align*}
Using \(K_{N,\rho}=Q_{N,\rho}J_{N,\rho}\), this is precisely
\eqref{eq:gamma-IBP-representation}, with the boundary term given by
\eqref{eq:boundary-definition}. No cancellation of the two endpoint
terms is assumed.
\end{proof}

To estimate \(J_{N,\rho}(a,b)\) using covariance decay,
we use the following uniform gradient bound, whose proof
is given in Appendix~\ref{app:soft-kernel-details}.

\begin{lemma}
\label{lem:kernel-estimates}
There exists \(C<\infty\) such that, for every
\(\rho\in[1/2,1)\),
\begin{equation*}
 |\nabla G_\rho(x,y)|\le C(1+|y|),
 \qquad (x,y)\in\mathbb R^2.
\end{equation*}
\end{lemma}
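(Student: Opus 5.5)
The plan is to work directly from the integral representation \eqref{eq:G-rho-integral}, $G_\rho=\int_0^\rho F_t\,t^{-1}\,dt$, and differentiate under the integral sign:
\[
\partial_1G_\rho(x,y)=\int_0^\rho \frac{B_t'(x)A_t(y)}{t}\,dt,\qquad
\partial_2G_\rho(x,y)=\int_0^\rho \frac{B_t(x)A_t'(y)}{t}\,dt .
\]
I will then bound the two integrands pointwise so that the resulting bounds are integrable on $(0,1)$, uniformly in $x$. Uniform integrability gives the bound for every $\rho<1$ at once, and also justifies the differentiation under the integral by dominated convergence. The required elementary facts are:
\begin{itemize}
\item $B_t'(x)=-(t^2x/s_t)B_t(x)$;
\item $A_t'(y)=t\{2\Phi(ty/\sqrt{s_t})-1\}$ (obtained by differentiating the closed form of $A_t$), so $|A_t'|\le t$;
\item $0\le A_t(y)\le t|y|+\sqrt{s_t}\le 1+|y|$;
\item $0\le B_t(x)\le (2\pi s_t)^{-1/2}$.
\end{itemize}

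I split the range at $t=1/4$. On $(0,1/4]$, $s_t\ge 15/16$, so everything is smooth. The factor $1/t$ is cancelled by the gain of a power of $t$ in each gradient component:
\begin{itemize}
\item $|B_t(x)A_t'(y)|/t\le C$;
\item $|B_t'(x)|A_t(y)/t\le C\,t\,|x|e^{-t^2x^2/(2s_t)}(1+|y|)\le C(1+|y|)$, using $\sup_u ue^{-u^2/2}<\infty$ with $u=t|x|/\sqrt{s_t}$.
\end{itemize}
This contributes at most $C(1+|y|)$. On $[1/4,\rho]$, the factor $1/t\le4$ is harmless. The $\partial_2$-integrand is then at most $C s_t^{-1/2}=C(1-t^2)^{-1/2}$, whose integral over $[0,1)$ is finite ($\arcsin$).

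The main obstacle is the $\partial_1$-integrand near $t=1$. There $B_t'$ becomes concentrated, and a naive bound $|x|s_t^{-3/2}$ is not integrable. I will handle it by the substitution $u=t|x|/\sqrt{s_t}$, using $\frac{d}{dt}\bigl(t(1-t^2)^{-1/2}\bigr)=(1-t^2)^{-3/2}$, so that $du=|x|s_t^{-3/2}\,dt$. Then
\[
\int_{1/4}^{\rho}\frac{t^2|x|}{s_t}\,\frac{e^{-t^2x^2/(2s_t)}}{\sqrt{2\pi s_t}}\,(1+|y|)\,\frac{dt}{t}
\le (1+|y|)\int_0^\infty \frac{e^{-u^2/2}}{\sqrt{2\pi}}\,du\le C(1+|y|),
\]
uniformly in $x$ (for $x=0$ the integrand vanishes). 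Combining the two ranges gives $|\nabla G_\rho(x,y)|\le C(1+|y|)$ with $C$ independent of $\rho\in[1/2,1)$.

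Finally, these dominating functions also show that $\nabla G_\rho$ is continuous. They identify the $L^2(\gamma_2)$ limit in \eqref{eq:G-rho-integral} with a pointwise $C^1$ function: the integral $\int_0^\rho F_t\,t^{-1}\,dt$ converges pointwise and locally uniformly, since $|F_t|\le Ct^2(1+|x|+|y|)^2$ for small $t$ by the same Taylor-type bounds. So the gradient bound refers to a genuine classical derivative.
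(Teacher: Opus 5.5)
Your proposal is correct and follows essentially the paper's route. You differentiate under $G_\rho=\int_0^\rho F_t\,t^{-1}\,dt$, use $B_t'=-(t^2x/s_t)B_t$, $|A_t'|\le t$ and $A_t\le C(1+|y|)$, and tame the $s_t^{-3/2}$ singularity of the $\partial_1$-integrand by the substitution $q=t/\sqrt{1-t^2}$ (your $u=|x|q$), with the $\partial_2$-integrand controlled by the integrable $s_t^{-1/2}$; the split at $t=1/4$ is harmless but unnecessary, since your substitution already works on all of $(0,\rho)$.
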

Since \(\lambda_N\) is bounded away from zero,
\eqref{Rmatrix_bound}
and Lemma~\ref{lem:kernel-estimates} imply
\begin{equation}
 \label{eq:J-kernel-envelope}
 |J_{N,\rho}(a,b)|
 \le C\chi(b-a)(1+|V_N(b)|).
\end{equation}

\section{Finite-rank replacement for the local kernel}
\label{sec:replacement}

To control the integral term in
\eqref{eq:gamma-IBP-representation}, we need the bound
\[
 |\Cov(K_{N,\rho}(a,b),g(S_{N,\rho}))|
 \le C\operatorname{Lip}(g)\,\chi(b-a)
       \frac{L_{T_N}}{\sqrt{T_N}}
\]
for every Lipschitz function \(g\), uniformly in
\(\rho\in[1/2,1)\).

By \eqref{eq:Q-kernel-definition}--\eqref{eq:K-kernel-definition},
\(K_{N,\rho}(a,b)\) depends only on
\(Y_N(a),Y_N'(a),Y_N''(a),Y_N(b),Y_N'(b)\).
Let \(\boldsymbol{\xi}_N=(a_1,\ldots,a_N,b_1,\ldots,b_N)\in\mathbb R^{2N}\) denote the standard Gaussian coefficient vector. After an orthogonal change of coordinates, these variables are functions of at most five independent standard Gaussian coordinates. Replacing the corresponding coordinates in
\(S_{N,\rho}\) by an independent copy, while keeping the
orthogonal coordinates unchanged, yields a statistic
independent of \(K_{N,\rho}(a,b)\).
The covariance can therefore be controlled through the
change in \(S_{N,\rho}\) caused by this replacement.

We first compare the exact zero counts obtained from
different choices of these coordinates while keeping
all remaining coordinates unchanged. The resulting estimate is stated
in Proposition~\ref{prop:replacement} below.
We then use Mehler's formula to transfer this estimate
to \(S_{N,\rho}\), uniformly in \(\rho\).
Combined with the weighted moment bound for
\(K_{N,\rho}(a,b)\) established in
Section~\ref{sec:malliavin-stein}, this yields the
covariance bound above.

\subsection{The coordinates selected by the kernel}
\label{subsec:conditional-fields}

For \(a,b\in[0,T_N]\), let
\begin{equation}
 \label{eq:two-site-subspace}
 \mathfrak H_{a,b}^{(N)}=\operatorname{span}
 \{e_{N,a},e_{N,a}',e_{N,a}'',e_{N,b},e_{N,b}'\}
 \subset\mathfrak H_N,
\end{equation}
and denote its orthogonal projection by \(\Pi_{a,b}^{(N)}\).
Under the isometry \(W\), this is the Gaussian subspace generated by \(Y_N(a),Y_N'(a),Y_N''(a),Y_N(b),Y_N'(b)\). Its dimension is at most
five; when \(a=b\), it is three for all sufficiently large \(N\).

Fix \(a,b\), and write \(p=\dim\mathfrak H_{a,b}^{(N)}\). Choose an
isometry \(\mathsf Q:\mathbb R^p\to\mathfrak H_{a,b}^{(N)}\).
In coefficient coordinates, decompose
\begin{equation}
 \label{eq:coefficient-decomposition}
 \boldsymbol{\xi}_N=\mathsf QX+\eta,
 \qquad X\sim\mathcal N(0,I_p),\qquad
 \eta\sim\mathcal N(0,I_{2N}-\Pi_{a,b}^{(N)}),
\end{equation}
where \(X\) and \(\eta\) are independent. Set
\begin{equation}
 \label{eq:conditional-process}
 \boldsymbol v(t)=\mathsf Q^*e_{N,t},\qquad
 R(t)=\langle e_{N,t},\eta\rangle,\qquad
 Y_N^x(t)=R(t)+\boldsymbol v(t)\cdot x.
\end{equation}
Here \(\mathsf Q^*\) denotes the adjoint. Throughout this section, we suppress the dependence of
\(p,\mathsf Q,R,\boldsymbol v\) on \(N,a,b\).
For a deterministic
\(x\in\mathbb R^p\), write
\[
 \mathcal Z_N^x(I)=\#\{t\in I:Y_N^x(t)=0\},\qquad
 \mathcal Z_N^x=\mathcal Z_N^x([0,T_N]).
\]
An arbitrary value may be
assigned on the event of an identically zero path, which will have
probability zero under the conditions used below.

We also regard \(\mathcal Z_N\), \(S_N\), and \(S_{N,\rho}\) as functions
of the coefficient vector, using the same notation for their evaluations.
For fixed \(a,b,\rho\), the definition of \(K_{N,\rho}(a,b)\)
implies that it can be written as \(k_{a,b,\rho}(X)\) for a measurable
function \(k_{a,b,\rho}\). If \(X'\) is an independent copy of \(X\),
also independent of \(\eta\), the vector \(\mathsf QX'+\eta\) has
the law of \(\boldsymbol{\xi}_N\) and is independent of \(X\). Hence, for a
Lipschitz function \(g\),
\begin{equation}
 \label{eq:local-resampling-identity}
 \begin{aligned}
 &\Cov(K_{N,\rho}(a,b),g(S_{N,\rho}))=\E\bigl[k_{a,b,\rho}(X)\bigl\{
 g(S_{N,\rho}(\mathsf QX+\eta))
 -g(S_{N,\rho}(\mathsf QX'+\eta))\bigr\}\bigr].
 \end{aligned}
\end{equation}
Thus the global covariance has been reduced to comparing the conditional fields \(Y_N^x\) and \(Y_N^y\), which share the same residual field \(R\). We first control the change in their exact zero counts and then use \eqref{eq:Mehler-coefficients} to transfer this estimate to the regularized statistic \(S_{N,\rho}\).

The perturbation from \(Y_N^x\) to \(Y_N^y\) is the deterministic function \(\boldsymbol v(t)\cdot(y-x)\).
To control its effect on the zero count over the full interval, we need the perturbation, together with its first two derivatives, to decay away from the conditioning times \(a\) and \(b\). This decay must remain uniform when \(b\) approaches \(a\), where the raw two-site coordinates become nearly linearly dependent. The required uniform control is provided by the following projection estimate.

\begin{lemma}
\label{lem:projection-tail}
There exist constants \(C<\infty\) and \(N_0\ge1\) such that, for
every \(N\ge N_0\), every \(a,b,t\in[0,T_N]\), and \(j=0,1,2\),
\begin{equation}
 \label{eq:projection-tail}
 \bigl\|\Pi_{a,b}^{(N)}e_{N,t}^{(j)}\bigr\|_{\mathfrak H}
 \le C\{\chi(t-a)+\chi(t-b)\}.
\end{equation}
\end{lemma}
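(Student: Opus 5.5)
The approach is to bound the norm of the projection through the Gram matrix of a well-conditioned basis of \(\mathfrak H_{a,b}^{(N)}\). For any basis \(w_1,\dots,w_p\) with Gram matrix \(G\succeq \kappa I_p\), and any \(h\in\mathfrak H\),
\[
\|\Pi_{a,b}^{(N)}h\|_{\mathfrak H}^2=c^\top G^{-1}c\le \kappa^{-1}|c|^2,\qquad c_k=\langle w_k,h\rangle_{\mathfrak H}.
\]
So it suffices to find, uniformly in \(N\ge N_0\) and \(a,b\), a basis with \(\kappa\ge c>0\) whose inner products with \(e_{N,t}^{(j)}\) are bounded by \(C\{\chi(t-a)+\chi(t-b)\}\).

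The inner products are explicit: \(\langle e_{N,s}^{(i)},e_{N,t}^{(j)}\rangle_{\mathfrak H}=\pm r_N^{(i+j)}(t-s)\), up to the real/imaginary structure shown in the proof of Lemma~\ref{lem:uniform-covariance-decay}. By part 2 of that lemma they are bounded by \(C\chi(t-s)\). By stationarity I take \(a=0\) when convenient. I split into two regimes according to whether \(|b-a|\) exceeds a large constant \(D\), to be fixed below.

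\emph{Far regime \(|b-a|\ge D\).} I use the raw basis \(e_{N,a},e_{N,a}',e_{N,a}'',e_{N,b},e_{N,b}'\). Its Gram matrix is block diagonal up to an error. The \(3\times3\) block is \eqref{eq:three-jet-covariance}, whose eigenvalues lie in \([c,C]\). The \(2\times2\) block is \(\operatorname{diag}(1,\lambda_N)\). The off-diagonal entries are \(r_N^{(k)}(b-a)\) with \(k\le3\), hence \(O(1/D)\). Choosing \(D\) large makes \(G\succeq (c/2)I_5\). Each \(c_k\) is bounded by \(C\chi(t-a)\) or \(C\chi(t-b)\), which gives \eqref{eq:projection-tail}.

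\emph{Near regime \(|b-a|<D\).} Here \(\chi(t-a)\asymp_D\chi(t-b)\). If \(|t-a|\le 2D\), the trivial bound \(\|\Pi e^{(j)}_{N,t}\|\le\|e_{N,t}^{(j)}\|\le1\) from \eqref{eq:all-derivative-bounds} already suffices. The remaining case is \(|t-a|>2D\), where the raw basis degenerates as \(b\to a\). I replace it by confluent divided differences: \(e_{N,a},e_{N,a}',e_{N,a}''\) together with
\[
w_4=e_{N,\cdot}[a,a,a,b],\qquad w_5=e_{N,\cdot}[a,a,a,b,b],
\]
where \(w_4\) is the third-order divided difference and \(w_5\) the fourth-order one. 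These span the same space when \(b\ne a\). They extend continuously to \(b=a\) as \(e_{N,a}'''/3!\) and \(e_{N,a}^{(4)}/4!\).

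By the Hermite–Genocchi formula, \(w_4,w_5\) are averages of \(e_{N,s}^{(3)},e_{N,s}^{(4)}\) over \(s\) in \([a,b]\). So \(|\langle w_k,e_{N,t}^{(j)}\rangle|\le C\sup_{s\in[a,b]}\chi(t-s)\le C_D\chi(t-a)\), again by Lemma~\ref{lem:uniform-covariance-decay}(2).

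\emph{Main obstacle: uniform lower bound on the Gram matrix in the near regime.} The Gram matrix \(G_N(b-a)\) of this basis is a continuous function of \(\tau=b-a\in[-D,D]\). Its entries are built from \(r_N^{(k)}\) with \(k\le8\) via divided-difference integrals. By Lemma~\ref{lem:uniform-covariance-decay}(1), it converges uniformly on \([-D,D]\) to the analogous matrix \(G(\tau)\) built from \(r(\tau)=\sin\tau/\tau\).

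For each \(\tau\), \(G(\tau)\) is nondegenerate. The sinc process has spectral measure \(d\lambda\) on \([0,1]\), whose support is infinite. Hence no nontrivial combination of values and derivatives at the two points \(a\) and \(b\), including the confluent limit (the \(5\)-jet at \(a\)), vanishes identically. Equivalently, a nonzero exponential polynomial \(\sum_k p_k(\lambda)e^{i\lambda s_k}\) of this form cannot vanish on \([0,1]\).

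Continuity and compactness of \([-D,D]\) then give \(\min_\tau\lambda_{\min}G(\tau)\ge 2c>0\). Uniform convergence yields \(G_N(\tau)\succeq cI_5\) for \(N\ge N_0\). Combining this with the inner-product bound finishes the near regime. I expect the most care to be needed in this step: checking the continuity of the divided-difference Gram entries at \(\tau=0\), and checking that the uniform convergence in Lemma~\ref{lem:uniform-covariance-decay}(1) covers all derivative orders up to \(8\).
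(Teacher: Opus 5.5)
Your proposal is correct and follows essentially the paper's argument. It uses the same Gram-matrix bound, raw coordinates with a block-diagonal perturbation for large $|b-a|$, and, near the diagonal, the same adapted coordinates: by Hermite--Genocchi, $e_{N,\cdot}[a,a,a,b]$ and $e_{N,\cdot}[a,a,a,b,b]$ are exactly the paper's $D_{3,N}(a,b)$ and $D_{4,N}(a,b)$, and their covariances with $e_{N,t}^{(j)}$ are controlled by Lemma~\ref{lem:uniform-covariance-decay}.

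The only difference is that you prove uniform conditioning on all of $|b-a|\le D$ by a single compactness argument through the confluent $5$-jet at $\tau=0$. The paper instead perturbs from an explicitly computed $5$-jet Gram matrix for $|h|\le h_0$, and switches to raw coordinates with compactness on $h_0\le|h|\le H$. At $a=b$ itself, where $\mathfrak H_{a,a}^{(N)}$ is only three-dimensional, note that the projection onto the larger $5$-jet span dominates it, or use the $3$-jet as the paper does.
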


The proof is given in Appendix~\ref{app:projection-estimates}.
The main difficulty is to obtain a bound uniform as \(b\to a\),
when the spanning vectors in \eqref{eq:two-site-subspace}
become nearly linearly dependent. For nearby distinct \(a,b\),
we resolve this by using divided differences whose Gram matrix
has a uniformly bounded inverse.

Since orthogonal projections are contractions,
\eqref{eq:all-derivative-bounds} implies
\begin{equation}
 \label{eq:conditional-all-derivatives}
 |\boldsymbol v^{(k)}(t)|\le1,\qquad
 \E|R^{(k)}(t)|^2\le1,
 \qquad k\ge0.
\end{equation}
Moreover, Lemma~\ref{lem:projection-tail} gives
\begin{equation}
 \label{eq:regression-tail}
 |\boldsymbol v^{(j)}(t)|
 \le C\{\chi(t-a)+\chi(t-b)\},\qquad j=0,1,2.
\end{equation}

We next show that the Gaussian vector
\((R(t),R'(t),R''(t))\) is uniformly nondegenerate when \(t\)
is sufficiently far from \(a\) and \(b\).
For \(A>1\), to be chosen below, set
\begin{equation*}
 \begin{split}
 d_{a,b}(t)&=\chi(t-a)+\chi(t-b),\\
 U_{a,b}&=\bigl([a-A,a+A]\cup[b-A,b+A]\bigr)\cap[0,T_N].
 \end{split}
\end{equation*}
The set \([0,T_N]\setminus U_{a,b}\) consists of at most three
intervals. The following lemma shows that \(A\) can be chosen
independently of \(N,a,b\) so that the required nondegeneracy
holds outside \(U_{a,b}\).

\begin{lemma}
\label{lem:far-residual}
There are constants \(A>1\), \(c,C>0\), and \(N_0\ge1\) such that,
for every \(N\ge N_0\), \(a,b\in[0,T_N]\), and
\(t\in[0,T_N]\) with
\(\min\{|t-a|,|t-b|\}\ge A\),
\begin{equation}
 \label{eq:far-residual-nondegeneracy}
 cI_3\preceq\operatorname{Cov}(R(t),R'(t),R''(t))\preceq CI_3.
\end{equation}
\end{lemma}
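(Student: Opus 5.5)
The plan is to write the residual covariance as the unconditional three-jet covariance minus a projection term, and to show that the projection term is small once \(t\) is far from both \(a\) and \(b\). In coefficient coordinates, \(R(t)=\langle e_{N,t},\eta\rangle\) with \(\eta\sim\mathcal N(0,I-\Pi_{a,b}^{(N)})\). Hence for \(j,k\in\{0,1,2\}\)
\[
\Cov(R^{(j)}(t),R^{(k)}(t))
=\langle e_{N,t}^{(j)},e_{N,t}^{(k)}\rangle_{\mathfrak H}
-\langle \Pi_{a,b}^{(N)}e_{N,t}^{(j)},\Pi_{a,b}^{(N)}e_{N,t}^{(k)}\rangle_{\mathfrak H}.
\]
The first matrix is exactly \eqref{eq:three-jet-covariance}. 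For \(N\ge N_0\), its eigenvalues lie in a fixed interval \([2c,C]\) by \eqref{eq:moment-uniform-bounds}--\eqref{eq:moment-determinants}.

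The second matrix is a Gram matrix, so it is positive semidefinite. This immediately gives the upper bound \(\Cov(R,R',R'')\preceq\Cov(Y_N,Y_N',Y_N'')\preceq CI_3\), which holds with no condition on \(t\).

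For the lower bound I use Lemma~\ref{lem:projection-tail}. If \(\min\{|t-a|,|t-b|\}\ge A>1\), then \(\chi(t-a)+\chi(t-b)\le 2/A\). So each \(\|\Pi_{a,b}^{(N)}e_{N,t}^{(j)}\|\le 2C/A\) for \(j=0,1,2\). By Cauchy--Schwarz, every entry of the projection Gram matrix is then at most \(4C^2/A^2\) in absolute value, and its operator norm is at most \(12C^2/A^2\). Choosing \(A\) so large that \(12C^2/A^2\le c\) yields
\[
\Cov(R(t),R'(t),R''(t))\succeq (2c-c)I_3=cI_3 .
\]
Here \(A\), \(c\) and \(C\) depend only on the constants in Lemma~\ref{lem:projection-tail} and on the spectral-moment bounds. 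They are therefore independent of \(N\), \(a\), \(b\) and \(t\), and \(N_0\) is the maximum of the thresholds from the two inputs.

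The only substantive input is the uniform projection tail of Lemma~\ref{lem:projection-tail}, including its uniformity as \(b\to a\). Since that lemma is assumed, the remaining argument is the Weyl-type perturbation above, and I expect no real obstacle. The one point to check is that the identification of the covariance of \(\eta\) as \(I-\Pi_{a,b}^{(N)}\) (restricted to \(\mathfrak H_N\)) gives exactly the displayed formula for the derivatives \(R^{(j)}\). This holds because differentiation in \(t\) commutes with the pairing against \(\eta\) for the finite sums defining \(e_{N,t}\).
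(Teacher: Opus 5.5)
Your proposal is correct and follows essentially the same route as the paper. You write the residual covariance as the three-jet matrix \eqref{eq:three-jet-covariance} minus the positive semidefinite Gram matrix of the projected vectors \(\Pi_{a,b}^{(N)}e_{N,t}^{(j)}\) (which is the paper's \(G(t)\), since \(|\boldsymbol v^{(j)}(t)|=\|\Pi_{a,b}^{(N)}e_{N,t}^{(j)}\|_{\mathfrak H}\)). The upper bound then comes from positive semidefiniteness, and the lower bound from Weyl's inequality using Lemma~\ref{lem:projection-tail}. The only difference is that the paper bounds \(\|G(t)\|_{\mathrm{op}}\) by its trace, while you bound it entrywise; the two are equivalent.
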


\begin{proof}
Let \(M_N\) be the covariance matrix in
\eqref{eq:three-jet-covariance}. Orthogonality in
\eqref{eq:coefficient-decomposition} gives
\[
 \operatorname{Cov}(R(t),R'(t),R''(t))
 =M_N-\bigl(\boldsymbol v^{(i)}(t)\cdot
                  \boldsymbol v^{(j)}(t)\bigr)_{0\le i,j\le2}.
\]
The matrix $G(t):=\bigl(\boldsymbol v^{(i)}(t)\cdot
\boldsymbol v^{(j)}(t)\bigr)_{0\le i,j\le2}$ is the Gram matrix of
\(\boldsymbol v(t),\boldsymbol v'(t),\boldsymbol v''(t)\), and is therefore
positive semidefinite. Hence
\[
\|G(t)\|_{\mathrm{op}}
\le \operatorname{Tr}G(t)
=
\sum_{j=0}^2|\boldsymbol v^{(j)}(t)|^2.
\]
By \eqref{eq:regression-tail}, if
\(\min\{|t-a|,|t-b|\}\ge A>1\), then $\sum_{j=0}^2|\boldsymbol v^{(j)}(t)|^2
\le C d_{a,b}(t)^2
\le CA^{-2}.$

By \eqref{eq:three-jet-covariance} and
\eqref{eq:moment-uniform-bounds}--\eqref{eq:moment-determinants},
there exists \(c_0>0\) such that $\lambda_{\min}(M_N)\ge c_0$ for all sufficiently large \(N\). Therefore,
\[
\lambda_{\min}\bigl(M_N-G(t)\bigr)
\ge \lambda_{\min}(M_N)-\|G(t)\|_{\mathrm{op}}
\ge c_0-CA^{-2}.
\]
Choosing \(A\) so that \(CA^{-2}\le c_0/2\) gives $M_N-G(t)\succeq {c_0}I_3/{2},$ which proves the lower bound in
\eqref{eq:far-residual-nondegeneracy}. Since \(G(t)\succeq0\) and
\(M_N-G(t)\) is a covariance matrix, we have $0\preceq M_N-G(t)\preceq M_N.$ The uniform upper bound in
\eqref{eq:far-residual-nondegeneracy} therefore follows from the
corresponding uniform bound on \(M_N\).
\end{proof}

\subsection{Stability of the zero count}

With \(A\) fixed as in Lemma~\ref{lem:far-residual}, we prove the
following estimate for the conditional fields in
\eqref{eq:conditional-process}, using the same \(\eta\)
for both choices of the conditioned coordinates.

\begin{proposition}
\label{prop:replacement}
There exist constants \(C<\infty\) and \(N_0\ge1\) such that, for
every \(N\ge N_0\), \(a,b\in[0,T_N]\), and
\(x,y\in\mathbb R^p\),
\begin{equation}
 \label{eq:replacement}
 \E_\eta|\mathcal Z_N^x-\mathcal Z_N^y|
 \le C(1+|x|+|y|)^2L_{T_N}.
\end{equation}
\end{proposition}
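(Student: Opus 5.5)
We follow the strategy announced in the introduction: join the two conditional fields by the linear path \(f_\theta(t)=(1-\theta)Y_N^x(t)+\theta Y_N^y(t)=R(t)+\boldsymbol v(t)\cdot z_\theta\), where \(z_\theta=(1-\theta)x+\theta y\) and \(\theta\in[0,1]\), and split \([0,T_N]\) into the near region \(U_{a,b}\) and its complement \(U_{a,b}^c\) (at most three intervals). On the near region we simply bound both counts separately. On the far region we count the bifurcation events along the path.

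\emph{Near region.} We bound \(\E_\eta\mathcal Z_N^x(U_{a,b})\), and likewise for \(y\), by \(C(1+|x|)\) through analyticity and Jensen's formula. The function \(Y_N^x\) extends to an entire function of exponential type at most \(1\) in the rescaled variable. Fix a disc \(D\) of radius \(2A\) centred at \(a\) (and similarly at \(b\)), containing the relevant real interval of length \(2A\). Jensen's formula on a slightly larger disc bounds the number of zeros in \(D\) by
\[
C\bigl(\log\sup_{|z-a|\le 4A}|Y_N^x(z)|-\log|Y_N^x(t_0)|\bigr)
\]
for some base point \(t_0\). The supremum term has expectation \(O(1+\log(1+|x|))\), since the Gaussian sup of \(R\) on a complex disc has bounded moments uniformly in \(N\): the coefficients are \(N^{-1/2}\)-normalised with frequencies \(n/N\le1\), and \(|\boldsymbol v(z)|\le C\) on the disc. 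For the lower term we average \(t_0\) over a small real interval, or use \(\E\log^-|Y^x_N(t_0)|\le C\). The latter holds because \(R(t_0)\) has a variance bounded below at some point \(t_0\) at distance of order \(A\) from \(a,b\), by Lemma~\ref{lem:far-residual}, and the log-singularity of a one-dimensional Gaussian with shifted mean is integrable uniformly in the mean. This gives an \(O(1+|x|)\) bound for the near part.

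\emph{Far region.} On each interval \(I\) of \(U_{a,b}^c\), the map \(\theta\mapsto\mathcal Z(f_\theta;I)\) can change only when (i) a zero crosses an endpoint of \(I\), i.e.\ \(f_\theta(e)=0\) at an endpoint \(e\), or (ii) a degenerate zero appears, \(f_\theta(t)=f_\theta'(t)=0\) with \(t\in I^\circ\). Each such event changes the count by at most \(2\). Hence
\[
|\mathcal Z^x(I)-\mathcal Z^y(I)|\le 2\#\{\theta:f_\theta(e)=0\}+2\#\{(t,\theta):f_\theta(t)=f_\theta'(t)=0\}.
\]
Endpoint crossings are at most one per endpoint for a linear-in-\(\theta\) function, unless \(f_\theta(e)\equiv0\), which is a null event. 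The second count we bound by the two-dimensional Kac--Rice formula for the map \((t,\theta)\mapsto(f_\theta(t),f'_\theta(t))\). Its Jacobian is
\[
\det\begin{pmatrix}f_\theta'(t)&\boldsymbol v(t)\cdot(y-x)\\ f_\theta''(t)&\boldsymbol v'(t)\cdot(y-x)\end{pmatrix}.
\]
On the zero set \(f'_\theta=0\), this reduces to \(|f_\theta''(t)|\,|\boldsymbol v(t)\cdot(y-x)|\). By \eqref{eq:regression-tail}, the factor \(|\boldsymbol v(t)\cdot(y-x)|\) is at most \(C|x-y|d_{a,b}(t)\). Conditional on \((f_\theta(t),f'_\theta(t))=0\), the variable \(f''_\theta(t)\) has its first absolute moment bounded by \(C(1+|x|+|y|)\). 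This uses Lemma~\ref{lem:far-residual} (uniform nondegeneracy of \((R,R',R'')\)) and \(|\boldsymbol v^{(j)}|\le1\) from \eqref{eq:conditional-all-derivatives}. The joint density of \((f_\theta(t),f'_\theta(t))\) at \(0\) is bounded by \(C\), again by the lower bound \(cI_3\). Therefore
\[
\E_\eta\#\{\cdot\}\le C|x-y|(1+|x|+|y|)\int_{U^c_{a,b}}d_{a,b}(t)\,dt\le C(1+|x|+|y|)^2 L_{T_N},
\]
since \(\int_0^{T_N}\chi(t-a)\,dt\le C\log T_N\). This is exactly the source of the logarithm.

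Combining the near and far bounds gives \eqref{eq:replacement}. The main obstacle is the near region: the Jensen argument must be uniform in \(N,a,b\) and must keep polynomial dependence on \(|x|\). In particular, the lower bound on \(\log|Y^x_N(t_0)|\) needs a base point where \(R\) is nondegenerate, chosen uniformly in \(a,b\). The plan is to take \(t_0\) at distance between \(A\) and \(2A\) from both conditioning times and to enlarge the disc accordingly. We must also check the Kac--Rice hypotheses on the open set where \(\boldsymbol v(t)\cdot(y-x)\ne0\); the complement contributes nothing to the Jacobian bound, and nondegeneracy of the pair \((f_\theta,f'_\theta)\) follows from Lemma~\ref{lem:far-residual}.
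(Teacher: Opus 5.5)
Your proposal is correct and follows the paper's proof essentially step for step. On the at most three far intervals you use the linear interpolation \(f_\theta\) and a two-dimensional Kac--Rice count of fold points \(f_\theta=f_\theta'=0\), plus endpoint crossings, and integrating \(d_{a,b}\) produces \(L_{T_N}\); on \(U_{a,b}\) you use Jensen's formula on a disc centred where \(\Var(R)\ge c\). Two details need the paper's more careful treatment. First, almost-sure regularity of every zero of \((t,\theta)\mapsto(f_\theta(t),f_\theta'(t))\) should come from a Bulinskaya-type lemma, not from the remark about \(\{\boldsymbol v\cdot(y-x)=0\}\); this is what guarantees finitely many simple folds, each changing the count by at most two. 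Second, the Jensen centre needs more room than distance \(2A\) from \(a\) (the paper takes one of \(a\pm4A,a\pm8A,a\pm12A\) in \([0,T_N]\)). Otherwise, when \(a\) is near an endpoint of \([0,T_N]\) and \(b\) is close to \(a\), no point at distance between \(A\) and \(2A\) from \(a\) and at least \(A\) from \(b\) need exist.
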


There are two regions to consider in proving this estimate.
Far from \(a,b\), the covariance bounds in
Lemma~\ref{lem:far-residual} allow us to apply a
two-dimensional Kac--Rice formula to the value and
derivative of the linear interpolation between
\(Y_N^x\) and \(Y_N^y\). This estimates the number of
interior pairs \((t,\theta)\) at which both vanish.
Together with endpoint crossings, these pairs control
the change in the zero count. Integrating the decay
bound \(d_{a,b}(t)\) over this region yields the factor
\(L_{T_N}\) in the replacement estimate.
On \(U_{a,b}\), we bound the two zero counts separately,
using analytic continuation and Jensen's formula. The next two lemmas provide the estimates for these two regions.

\begin{lemma}
\label{lem:far-count-change}
Let \(A\) and \(N_0\) be as in
Lemma~\ref{lem:far-residual}. There exists a constant
\(C<\infty\) such that, for every \(N\ge N_0\),
\(a,b\in[0,T_N]\), \(x,y\in\mathbb R^p\), and every
compact interval \(I\subset[0,T_N]\) satisfying
\(\min\{|t-a|,|t-b|\}\ge A\) for all \(t\in I\), we have
\begin{equation}
 \label{eq:far-count-change}
 \E_\eta|\mathcal Z_N^x(I)-\mathcal Z_N^y(I)|
 \le 2+C|x-y|(1+|x|+|y|)\int_I d_{a,b}(t)\,dt.
\end{equation}
\end{lemma}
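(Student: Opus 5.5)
We join the two conditional fields by the linear path \(f_\theta(t)=(1-\theta)Y_N^x(t)+\theta Y_N^y(t)=R(t)+\boldsymbol v(t)\cdot z_\theta\), where \(z_\theta=(1-\theta)x+\theta y\), \(\theta\in[0,1]\). Write \(I=[\alpha,\beta]\) and \(w=y-x\), and set \(g(t)=\boldsymbol v(t)\cdot w\), so that \(\partial_\theta f_\theta=g\). By \eqref{eq:regression-tail}, \(|g^{(j)}(t)|\le C|w|\,d_{a,b}(t)\) for \(j=0,1,2\).

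The first step is a deterministic counting argument. Consider the map \(\theta\mapsto\#\{t\in I:f_\theta(t)=0\}\). Fix a realization of \(\eta\) for which no \(f_\theta\) has a double zero at either endpoint, and for which the critical zero set \(\{(t,\theta)\in I\times[0,1]:f_\theta(t)=f_\theta'(t)=0\}\) is finite and consists of nondegenerate points. For such \(\eta\), the zero count can change only in two ways: at a critical pair \((t,\theta)\) in the interior, where it changes by at most \(2\), or when a zero enters or leaves \(I\) through \(\alpha\) or \(\beta\), where it changes by \(1\). Near a simple zero, the implicit function theorem moves the zero continuously with \(\theta\). This gives
\[
|\mathcal Z_N^x(I)-\mathcal Z_N^y(I)|\le 2\,\#\{(t,\theta)\in I\times[0,1]:f_\theta=f_\theta'=0\}+\sum_{s\in\{\alpha,\beta\}}\#\{\theta:f_\theta(s)=0\}.
\]
Since \(\theta\mapsto f_\theta(s)\) is affine, each endpoint term is at most \(1\) almost surely, because the path is not identically zero. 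This is the source of the constant \(2\) in \eqref{eq:far-count-change}. The genericity conditions above should follow from a Bulinskaya-type argument applied to the two-parameter field \((t,\theta)\mapsto(f_\theta(t),f_\theta'(t))\). Its density is bounded because of the nondegeneracy in Lemma~\ref{lem:far-residual}.

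The second step bounds the expected number of critical pairs by the two-parameter Kac--Rice formula (see \citet{AzaisWschebor2009}, Theorem~6.2) applied to \(\Psi(t,\theta)=(f_\theta(t),f_\theta'(t))\). The Jacobian is
\[
\det\begin{pmatrix} f_\theta'(t)& g(t)\\ f_\theta''(t)& g'(t)\end{pmatrix}.
\]
On the event \(\{\Psi=0\}\) we have \(f_\theta'=0\), so the absolute determinant reduces to \(|g(t)f_\theta''(t)|\). The expected number of critical pairs is then at most
\[
\int_I\int_0^1 |g(t)|\,\E\bigl[|f_\theta''(t)|\,\big|\,f_\theta(t)=f_\theta'(t)=0\bigr]\,p_{(f_\theta(t),f_\theta'(t))}(0,0)\,d\theta\,dt.
\]
The vector \((f_\theta,f_\theta',f_\theta'')(t)\) is Gaussian with covariance \(\operatorname{Cov}(R,R',R'')(t)\) and mean \((\boldsymbol v(t),\boldsymbol v'(t),\boldsymbol v''(t))\cdot z_\theta\). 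By Lemma~\ref{lem:far-residual} this covariance lies between \(cI_3\) and \(CI_3\). Hence the density at the origin is uniformly bounded, and the conditional first moment of \(f_\theta''\) is bounded by \(C(1+|\boldsymbol v''(t)\cdot z_\theta|+|(\boldsymbol v,\boldsymbol v')(t)\cdot z_\theta|)\le C(1+|x|+|y|)\), using \(|\boldsymbol v^{(k)}|\le1\). Combining this with \(|g(t)|\le C|w|\,d_{a,b}(t)\) yields \(C|x-y|(1+|x|+|y|)\int_I d_{a,b}\,dt\).

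The main obstacle is the first step. We must justify rigorously that the count can change only at critical pairs or at the endpoints, and that almost surely every critical pair is isolated and nondegenerate. This requires checking the hypotheses of the Rice formula for a field on a two-dimensional domain with boundary. It also requires handling a possible double zero appearing exactly at an endpoint. These exceptional events are controlled using the nondegenerate three-jet from Lemma~\ref{lem:far-residual}. The key fact is that \((f_\theta(s),f_\theta'(s))\) as a function of \(\theta\) is a curve in \(\mathbb R^2\) with a bounded density, so it hits the origin with probability zero.
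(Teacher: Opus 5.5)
Your proposal is correct and follows essentially the same route as the paper. Both arguments use the linear path \(f_\theta\), a Bulinskaya-type lemma for \(G=(f_\theta,f_\theta')\) on \(I\times[0,1]\), the two-parameter Kac--Rice formula with Jacobian \(-h(t)f_\theta''(t)\) on the zero set, bounds from Lemma~\ref{lem:far-residual} and \eqref{eq:regression-tail}, and the deterministic count \(|\mathcal Z_N^x(I)-\mathcal Z_N^y(I)|\le 2+2M_I\). The only differences are cosmetic: you exclude boundary critical points by noting that \(\theta\mapsto(f_\theta(s),f_\theta'(s))\) is affine, so a nondegenerate planar Gaussian hits it with probability zero, whereas the paper applies Kac--Rice to the null set \(\partial K\) after extending \(G\) to an open neighborhood with covariance bounded below.
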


\begin{proof}
If \(I\) is a singleton, both counts vanish almost surely
by Lemma~\ref{lem:far-residual}. We therefore assume that
\(I\) has positive length. Fix \(N,a,b,x,y\), and set
\[
f_\theta(t)
=
R(t)+\boldsymbol v(t)\cdot\bigl((1-\theta)x+\theta y\bigr),
\qquad
h(t)=\boldsymbol v(t)\cdot(y-x).
\]
Thus \(f_\theta=f_0+\theta h\), with \(f_0=Y_N^x\) and
\(f_1=Y_N^y\). All expectations below are with respect to
\(\eta\), and primes denote derivatives in \(t\) with
\(\theta\) fixed. If \(h\equiv0\), the counts agree and
the result is immediate.

We count the interior pairs \((t,\theta)\) at which
\(f_\theta(t)=f_\theta'(t)=0\). Write
\[
K=I\times[0,1],
\qquad
G(t,\theta)=
\begin{pmatrix}
f_\theta(t)\\
f_\theta'(t)
\end{pmatrix}.
\]
By Lemma~\ref{lem:far-residual},
\[
\operatorname{Cov}(G(t,\theta))
=
\operatorname{Cov}\bigl((R(t),R'(t))^\top\bigr)
\succeq cI_2,
\qquad (t,\theta)\in K.
\]
The trigonometric representation and continuity allow us
to extend \(G\), using the same formulas, to a bounded
open neighborhood of \(K\) on which its covariance is
bounded below by \((c/2)I_2\). Its sample paths are smooth,
and its value densities are uniformly bounded there.
The Bulinskaya-type lemma
\citep[Proposition~2.2]{ArmentanoAzaisLeon2025}
therefore implies that, almost surely, every zero of
\(G\) in \(K\) is regular. Since
\[
D_{(t,\theta)}G(t,\theta)
=
\begin{pmatrix}
f_\theta'(t)&h(t)\\
f_\theta''(t)&h'(t)
\end{pmatrix},
\]
at every such zero we have
\[
\det D_{(t,\theta)}G(t,\theta)
=
-h(t)f_\theta''(t)\ne0.
\]
Since \(G^{-1}(0)\cap K\) is compact and every one of its
points is isolated, it is almost surely finite.
In particular, no \(f_\theta\), \(\theta\in[0,1]\),
is identically zero on \(I\).
Let
\[
M_I
=
\#\bigl\{
(t,\theta)\in\operatorname{int}(I)\times(0,1):
G(t,\theta)=0
\bigr\}.
\]
The Gaussian Kac--Rice formula
\citep[Theorem~2.2]{ArmentanoAzaisLeon2025}
gives
\begin{equation*}
\begin{aligned}
\E M_I
={}&
\int_I\int_0^1
|h(t)|\,p_{f_\theta(t),f_\theta'(t)}(0,0)\\
&\qquad\times
\E\left[
|f_\theta''(t)|
\,\middle|\,
f_\theta(t)=f_\theta'(t)=0
\right]
\,d\theta\,dt.
\end{aligned}
\end{equation*}
The same formula applied to \(\partial K\), which has
zero two-dimensional Lebesgue measure, shows that
\(G\) has no zeros on \(\partial K\), almost surely.

By \eqref{eq:conditional-all-derivatives},
\[
|\E f_\theta^{(j)}(t)|\le |x|+|y|,
\qquad j=0,1,2.
\]
The uniform covariance bounds in
Lemma~\ref{lem:far-residual} imply
\[
p_{f_\theta(t),f_\theta'(t)}(0,0)\le C.
\]
Gaussian regression also shows that the conditional mean
of \(f_\theta''(t)\), given
\(f_\theta(t)=f_\theta'(t)=0\), has absolute value at most
\(C(|x|+|y|)\), while its conditional variance is at most
\(\Var(R''(t))\le1\). Hence
\[
\E\left[
|f_\theta''(t)|
\,\middle|\,
f_\theta(t)=f_\theta'(t)=0
\right]
\le C(1+|x|+|y|).
\]
Together with
\[
|h(t)|\le C|x-y|d_{a,b}(t),
\]
which follows from \eqref{eq:regression-tail}, these bounds
yield
\[
\E M_I
\le
C|x-y|(1+|x|+|y|)
\int_I d_{a,b}(t)\,dt
<\infty.
\]

It remains to relate \(M_I\) to the change in the zero
count. At a pair \((t_*,\theta_*)\) counted by \(M_I\),
we have \(h(t_*)\ne0\) and \(f_{\theta_*}''(t_*)\ne0\).
The implicit function theorem writes the local zero set
of \(f_\theta(t)\) as \(\theta=\psi(t)\), where
\[
\psi'(t_*)=0,
\qquad
\psi''(t_*)
=
-\frac{f_{\theta_*}''(t_*)}{h(t_*)}\ne0.
\]
Thus two simple zeros are created or removed as
\(\theta\) crosses \(\theta_*\). Each such pair contributes
at most two to the change in the zero count; the double
zero at the critical parameter itself is counted once.

At each endpoint \(u\) of \(I\), the variables \(f_0(u)\)
and \(f_1(u)\) are nondegenerate Gaussian and hence are
almost surely nonzero. The affine function
\[
\theta\longmapsto f_\theta(u)=f_0(u)+\theta h(u)
\]
therefore vanishes at most once. Any resulting zero is
simple in \(t\), since \(G\) has no zeros on \(\partial K\).
Consequently, each endpoint contributes at most one to
the change in the count. The absence of zeros of \(G\)
on \(I\times\{0,1\}\) also ensures that \(f_0\) and \(f_1\)
have no multiple zeros on \(I\).

Away from these finitely many parameter values, the
implicit function theorem and compactness of \(I\)
imply that the zero count is locally constant.
Summing the local contributions, including those that
occur at the same parameter value, gives
\begin{equation*}
\bigl|\mathcal Z_N^x(I)-\mathcal Z_N^y(I)\bigr|
\le 2+2M_I
\qquad\text{almost surely}.
\end{equation*}
Taking expectations and using the bound for \(\E M_I\)
proves \eqref{eq:far-count-change}.
\end{proof}

We next turn to \(U_{a,b}\), where uniform nondegeneracy
may fail. The following lemma bounds each zero count
separately using analyticity and Jensen's formula.

\begin{lemma}
\label{lem:near-count-bound}
There exists \(C<\infty\) such that, for every
\(N\ge N_0\), \(a,b\in[0,T_N]\), and \(x\in\mathbb R^p\),
\begin{equation}
 \label{eq:local-Jensen-bound}
 \E_\eta\mathcal Z_N^x(U_{a,b})
 \le C\{1+\log(1+|x|)\}.
\end{equation}
\end{lemma}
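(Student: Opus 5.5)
The set \(U_{a,b}\) is a union of at most two intervals of length at most \(2A\), with \(A\) fixed by Lemma~\ref{lem:far-residual}. It therefore suffices to bound \(\E_\eta\mathcal Z_N^x(J)\) for one interval \(J=[t_0-A,t_0+A]\cap[0,T_N]\), uniformly in \(t_0\). The plan is to exploit analyticity. The field \(Y_N^x(t)=R(t)+\boldsymbol v(t)\cdot x\) is a trigonometric polynomial in the rescaled variable \(t/N\) with frequencies \(n/N\in(0,1]\), so it extends to an entire function of \(z=t+is\). On the strip \(|s|\le s_0\) the exponentials satisfy \(|e^{\pm inz/N}|\le e^{s_0}\). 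I would apply Jensen's formula on a disc \(D(t_0,2r)\) with \(r\) a fixed multiple of \(A\), say \(r=2A\), so that \(J\subset D(t_0,r)\). The standard consequence of Jensen's formula gives
\[
\mathcal Z_N^x(J)\le \frac{1}{\log 2}\Bigl(\log\max_{|z-t_0|=4A}|Y_N^x(z)|-\log|Y_N^x(t_0)|\Bigr).
\]
The real zeros in \(J\) are among the complex zeros in the smaller disc, so this bounds the count. One caveat: if \(t_0\) is near the boundary of \([0,T_N]\), the disc leaves the interval. This is harmless, since the analytic extension is defined everywhere and we only use upper bounds.

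The first step is the upper term. On the circle, \(|Y_N^x(z)|\le |R(z)|+|\boldsymbol v(z)\cdot x|\). The complexified coefficient vectors \(e_{N,z}\) have \(\mathfrak H\)-norm at most \(e^{4A}\), because \(|\operatorname{Im}z|\le 4A\). Since the projection is a contraction, \(|\boldsymbol v(z)|\le e^{4A}\), and so \(|\boldsymbol v(z)\cdot x|\le C|x|\). For the residual part, \(\E\sup_{|z-t_0|=4A}|R(z)|\le C\). I would get this from a chaining/Dudley bound, or more simply by bounding the supremum through Cauchy estimates on a slightly larger disc together with Gaussian moments and the fact that the derivatives \(e^{(k)}_{N,z}\) have norm at most \(e^{CA}\). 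Jensen's inequality for the concave logarithm then gives
\[
\E_\eta\log\max|Y_N^x|\le \log\bigl(C+C|x|\bigr)\le C\{1+\log(1+|x|)\}.
\]

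The second step is the lower term, \(-\E_\eta\log|Y_N^x(t_0)|\), and this is the main obstacle. Conditional on \(X=x\), \(Y_N^x(t_0)\) is Gaussian with mean \(\boldsymbol v(t_0)\cdot x\) and variance \(\sigma^2=\Var R(t_0)=1-|\boldsymbol v(t_0)|^2\). This variance can vanish, for instance when \(t_0=a\). So the centre must be chosen inside \(J\), at a point where \(R\) is uniformly nondegenerate. I would pick the centre \(t_*\) to be a point at distance at least \(A\) from both \(a\) and \(b\). Lemma~\ref{lem:far-residual} then gives \(\Var R(t_*)\ge c\), and such a point lies within distance about \(3A\) of any point of \(J\), with radii enlarged accordingly. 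For a Gaussian with variance \(\ge c\) and arbitrary mean, \(\E|\log|G||\) is bounded above by \(C\), uniformly in the mean, because the density is at most \(C\) near zero. This controls the lower term by a constant. Two edge cases need care. If \([0,T_N]\) is short near an endpoint, the point \(t_*\) can be chosen outside \([0,T_N]\); this is fine because \(R\) is defined for all real \(t\), but I must check that Lemma~\ref{lem:far-residual} extends there, since its proof only uses the decay \eqref{eq:regression-tail}. The alternative is to note that \(T_N\to\infty\), so enough room always exists. Combining the two steps and summing over the at most two intervals gives \eqref{eq:local-Jensen-bound}.
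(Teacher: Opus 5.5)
Your proposal is correct and follows essentially the paper's argument. You apply Jensen's formula on a fixed-size disc centred at a deterministic point at distance at least \(A\) from both \(a\) and \(b\), so that Lemma~\ref{lem:far-residual} gives \(\Var R(c)\ge c_0\) and the \(\log^-\) term is bounded through the bounded Gaussian density, and you combine this with \(\E_\eta\sup|Y_N^x|\le C(1+|x|)\) on the outer circle and concavity of the logarithm. The differences are cosmetic: the paper bounds the supremum directly from the Taylor series at the centre, using \(\E_\eta|R^{(k)}(c)|\le1\) and \(|\boldsymbol v^{(k)}(c)|\le1\), and it settles your boundary edge case by taking \(T_N\ge24A\), so the centre can be chosen inside \([0,T_N]\) within distance \(12A\) of \(a\).
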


\begin{proof}
By increasing \(N_0\) if necessary, we may assume that
\(T_N\ge24A\). This will allow us to choose a center \(c\)
for Jensen's formula such that
\(\operatorname{Var}(R(c))\ge c_0>0\).
First consider
\(I_a=[a-A,a+A]\cap[0,T_N]\). We choose a deterministic point
\(c\in[0,T_N]\) such that \(|c-a|\le12A\) and
\(\min\{|c-a|,|c-b|\}\ge A\). To see that this is possible, suppose
first that \(a\le T_N/2\). Since \(T_N\ge24A\), the three points
\(a+4A,a+8A,a+12A\) belong to \([0,T_N]\). They are separated by
\(4A\), so at most one is within distance \(A\) of \(b\), and any
of the others is admissible. If \(a>T_N/2\), use the corresponding
negative shifts instead. Lemma~\ref{lem:far-residual} then gives
\begin{equation*}
\Var(R(c))\ge c_0>0.
\end{equation*}

Set \(r=16A\). For every \(t\in I_a\),
\(|t-c|\le |t-a|+|a-c|\le13A<r\), so the disk of radius \(r\)
centered at \(c\) contains \(I_a\). We will apply Jensen's formula
on the concentric disk of radius \(2r\). This requires an upper
bound for the function on the larger disk and control of its value
at the center.

Since \(Y_N^x\) is a trigonometric polynomial, it extends to an
entire function, with Taylor expansion
\[
Y_N^x(z)
=\sum_{k=0}^\infty
\frac{R^{(k)}(c)+\boldsymbol v^{(k)}(c)\cdot x}{k!}(z-c)^k.
\]
Put \(M_x=\sup_{|z-c|\le2r}|Y_N^x(z)|\).
By \eqref{eq:conditional-all-derivatives},
\(\E_\eta|R^{(k)}(c)|\le1\) and
\(|\boldsymbol v^{(k)}(c)\cdot x|\le|x|\) for every \(k\ge0\).
Taking absolute values in the series and applying Tonelli's theorem,
we obtain
\begin{align}
\E_\eta M_x
&\le\sum_{k=0}^\infty\frac{(2r)^k}{k!}
\left(\E_\eta|R^{(k)}(c)|
      +|\boldsymbol v^{(k)}(c)\cdot x|\right)
\le e^{2r}(1+|x|).
\label{eq:analytic-supremum-bound}
\end{align}
The factor \(e^{2r}\) is a numerical constant, since \(r=16A\) is fixed.

Next, \(Y_N^x(c)=R(c)+\boldsymbol v(c)\cdot x\) is a real Gaussian
random variable with variance \(\Var(R(c))\ge c_0\). Its density
is therefore bounded by \((2\pi c_0)^{-1/2}\), independently of
its mean. In particular,
\(\mathbb{P}_\eta\{|Y_N^x(c)|<\varepsilon\}\le C\varepsilon\) for every
\(\varepsilon>0\). Writing
\(\log^-u=\max\{-\log u,0\}\), the tail integral formula gives
\begin{equation}
\label{eq:negative-log-at-center}
\E_\eta\log^-|Y_N^x(c)|
=\int_0^\infty\mathbb{P}_\eta\{|Y_N^x(c)|<e^{-u}\}\,du
\le\int_0^\infty\min\{1,Ce^{-u}\}\,du\le C.
\end{equation}
Also, \(Y_N^x(c)\ne0\) almost surely, so the conditional polynomial
is almost surely not identically zero.

Let \(n_x(r)\) be the number of complex zeros in the disk
\(\{|z-c|\le r\}\), counted with multiplicity. Jensen's formula
on the concentric disk of radius \(2r\) shows that each zero in the
inner disk contributes at least \(\log 2\). Hence
\begin{equation}
\label{eq:Jensen-count-inequality}
n_x(r)\log2
\le \log M_x-\log|Y_N^x(c)|
\le \log^+M_x+\log^-|Y_N^x(c)|.
\end{equation}
Here \(\log^+u:=\max\{\log u,0\}\). If the outer circle contains
a zero, apply Jensen's formula at radii increasing to \(2r\) that
avoid zeros and pass to the limit.

Since \(I_a\) lies in the inner disk and \(n_x(r)\) counts zeros with
multiplicity, \(\mathcal Z_N^x(I_a)\le n_x(r)\), including when the
conditioning forces multiple zeros. Moreover,
\(\log^+u\le\log(1+u)\), and concavity of the logarithm gives
\(\E_\eta\log^+M_x\le\log(1+\E_\eta M_x)\).
Taking expectations in \eqref{eq:Jensen-count-inequality} and using
\eqref{eq:analytic-supremum-bound} and
\eqref{eq:negative-log-at-center}, we obtain
\[
\begin{aligned}
\E_\eta\mathcal Z_N^x(I_a)
&\le \frac{\log(1+\E_\eta M_x)+C}{\log2}\le C\{1+\log(1+|x|)\}.
\end{aligned}
\]
The same argument applies to \(I_b:=[b-A,b+A]\cap[0,T_N]\).
Since \(U_{a,b}=I_a\cup I_b\), its zero count is at most the sum
of the two counts, proving \eqref{eq:local-Jensen-bound}.
\end{proof}

\begin{proof}[Proof of Proposition~\ref{prop:replacement}]
Write \(F=[0,T_N]\setminus U_{a,b}\), and let \(I_1,\ldots,I_m\), \(m\le3\), be the closures of its nonempty interval components. Each \(I_j\) satisfies the hypotheses of Lemma \ref{lem:far-count-change}. Every point added by taking these closures has distance at least \(A\) from both conditioning times, so neither conditional field vanishes there almost surely. Consequently, the zero counts on \(F\) agree with the sums of those on the \(I_j\). Since \(U_{a,b}\) and \(F\) are disjoint, their zero counts are additive, including when the conditioning forces a zero at an endpoint of \([0,T_N]\).

Applying Lemma~\ref{lem:far-count-change} to each component of
\([0,T_N]\setminus U_{a,b}\), and using that there are at most three such
components, gives
\[
\begin{aligned}
&\E_\eta\bigl|
\mathcal Z_N^x([0,T_N]\setminus U_{a,b})
-\mathcal Z_N^y([0,T_N]\setminus U_{a,b})
\bigr|\le6+C|x-y|(1+|x|+|y|)
\int_0^{T_N}d_{a,b}(t)\,dt.
\end{aligned}
\]
Moreover,
\begin{equation*}
\int_0^{T_N}d_{a,b}(t)\,dt
\le C L_{T_N},
\end{equation*}
and therefore the far-region contribution is bounded by $6+C|x-y|(1+|x|+|y|)L_{T_N}.$

On \(U_{a,b}\), we simply use $\bigl|\mathcal Z_N^x(U_{a,b})-\mathcal Z_N^y(U_{a,b})\bigr|
\le \mathcal Z_N^x(U_{a,b})+\mathcal Z_N^y(U_{a,b}).$ Lemma~\ref{lem:near-count-bound} then gives
\[
\E_\eta\bigl|
\mathcal Z_N^x(U_{a,b})-\mathcal Z_N^y(U_{a,b})
\bigr|
\le
C\bigl\{
1+\log(1+|x|)+\log(1+|y|)
\bigr\}.
\]
Combining the near- and far-region estimates, and using
\[
|x-y|\le |x|+|y|,
\qquad
1+\log(1+|x|)+\log(1+|y|)
\le C(1+|x|+|y|)^2,
\]
together with \(L_{T_N}\ge1\), yields
\[
\E_\eta|\mathcal Z_N^x-\mathcal Z_N^y|
\le
C(1+|x|+|y|)^2L_{T_N}.
\]
This is \eqref{eq:replacement}.
\end{proof}

\subsection{Covariance estimates after Ornstein--Uhlenbeck averaging}

Proposition~\ref{prop:replacement} controls the exact count of the
conditional fields. The covariance identity
\eqref{eq:local-resampling-identity} concerns \(S_{N,\rho}\), so we
next lift the count estimate through the semigroup. We use the same added Gaussian noise for both conditional
fields. Proposition~\ref{prop:replacement} then yields a
bound independent of \(\rho\), as shown in
Lemma~\ref{lem:OU-stability} below.

We also need a version of \eqref{eq:semigroup-on-count} after fixing
some of the Gaussian coordinates. If \(\boldsymbol{\xi}_N'\) is an independent
copy of \(\boldsymbol{\xi}_N\), Mehler's formula gives
\begin{equation}
 \label{eq:Mehler-coefficients}
 S_{N,\rho}(z)
 =\E_{\boldsymbol{\xi}_N'}S_N(\rho z+\sqrt{s_\rho}\boldsymbol{\xi}_N'),
 \qquad z\in\mathfrak H_N.
\end{equation}
This identity holds pointwise for every deterministic \(z\), with the
left-hand side defined by the integral
\eqref{eq:S-N-rho-definition}. To verify the identity, write the
deterministic polynomial with coefficient vector \(z\) and its
standardized derivative as
\[
 Y_N^z(t)=\langle e_{N,t},z\rangle,
 \qquad
 V_N^z(t)=\langle f_{N,t},z\rangle.
\]
Conditional on \(z\), the added Gaussian noise has independent value
and standardized derivative at every fixed time, both with variance
\(s_\rho>0\). The conditional Rice formula therefore gives
\begin{align*}
 &\E_{\boldsymbol{\xi}_N'}\mathcal Z_N(\rho z+\sqrt{s_\rho}\boldsymbol{\xi}_N')\\*
 &\qquad=\sqrt{\lambda_N}\int_0^{T_N}
 \frac{\exp\{-\rho^2Y_N^z(t)^2/(2s_\rho)\}}
 {\sqrt{2\pi s_\rho}}
 \E\bigl|\rho V_N^z(t)+\sqrt{s_\rho}Z\bigr|\,dt\\*
 &\qquad=\sqrt{\lambda_N}\int_0^{T_N}
 B_\rho(Y_N^z(t))A_\rho(V_N^z(t))\,dt.
\end{align*}
Subtracting the mean in \eqref{eq:mean-zero-count} and dividing by \(\sqrt{T_N}\) proves \eqref{eq:Mehler-coefficients}. All counts in
this calculation are bounded by \(2N\) almost surely, and the
conditional Gaussian field has strictly positive value variance.
Thus the Rice formula and the required integrability apply for
each deterministic \(z\).

\begin{lemma}
\label{lem:OU-stability}
For every \(N\ge N_0\), under the decomposition \eqref{eq:coefficient-decomposition}, the following bound holds uniformly in \(a,b\in[0,T_N]\), \(x,y\in\mathbb R^p\), and \(\rho\in[1/2,1)\),
\begin{equation*}
 \E_\eta\left|S_{N,\rho}(\mathsf Qx+\eta)
              -S_{N,\rho}(\mathsf Qy+\eta)\right|
 \le C\frac{L_{T_N}}{\sqrt{T_N}}(1+|x|+|y|)^2.
\end{equation*}
\end{lemma}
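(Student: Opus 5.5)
We plan to use Mehler's formula \eqref{eq:Mehler-coefficients} to write both regularized statistics as averages of exact counts under a common added noise. Then we decompose the noise along the same splitting \eqref{eq:coefficient-decomposition}, so that the problem reduces to Proposition~\ref{prop:replacement} with shifted conditioned coordinates. Since the centering constant \(\E\mathcal Z_N\) is the same for both arguments,
\[
S_{N,\rho}(\mathsf Qx+\eta)-S_{N,\rho}(\mathsf Qy+\eta)
=\frac1{\sqrt{T_N}}\E_{\boldsymbol{\xi}_N'}\Bigl[\mathcal Z_N\bigl(\rho(\mathsf Qx+\eta)+\sqrt{s_\rho}\boldsymbol{\xi}_N'\bigr)
-\mathcal Z_N\bigl(\rho(\mathsf Qy+\eta)+\sqrt{s_\rho}\boldsymbol{\xi}_N'\bigr)\Bigr].
\]
Moving the absolute value inside the \(\boldsymbol{\xi}_N'\)-expectation, it suffices to bound the expectation over the pair \((\eta,\boldsymbol{\xi}_N')\) of the absolute difference of exact counts.

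Decompose \(\boldsymbol{\xi}_N'=\mathsf QX''+\eta''\), where \(X''\sim\mathcal N(0,I_p)\) and \(\eta''\) is an independent copy of \(\eta\), independent of \(X''\). Then \(\eta_\rho:=\rho\eta+\sqrt{s_\rho}\eta''\) has exactly the law \(\mathcal N(0,I_{2N}-\Pi_{a,b}^{(N)})\) of \(\eta\) and is independent of \(X''\). The first argument becomes \(\mathsf Q(\rho x+\sqrt{s_\rho}X'')+\eta_\rho\), and the second is the same with \(y\) in place of \(x\). Conditionally on \(X''\), the two counts are therefore \(\mathcal Z_N^{x'}\) and \(\mathcal Z_N^{y'}\), computed with the common residual \(\eta_\rho\), where
\[
x'=\rho x+\sqrt{s_\rho}X'',\qquad y'=\rho y+\sqrt{s_\rho}X''.
\]
Proposition~\ref{prop:replacement} applies to this pair, because the residual has the same law as \(\eta\); only the law of \(\eta\) enters its proof. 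It gives
\[
\E_{\eta_\rho}|\mathcal Z_N^{x'}-\mathcal Z_N^{y'}|\le C(1+|x'|+|y'|)^2L_{T_N}.
\]
Using \(\rho\le1\) and \(s_\rho\le1\), we have \((1+|x'|+|y'|)^2\le C(1+|x|+|y|+2|X''|)^2\). Taking expectation over \(X''\), whose moments are bounded uniformly since \(p\le5\), yields \(C(1+|x|+|y|)^2L_{T_N}\). Dividing by \(\sqrt{T_N}\) gives the claim, with a constant independent of \(\rho\), \(a\), \(b\).

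The main point needing care is justifying the interchange. Fubini's theorem applies because all counts are bounded by \(2N\), and the exceptional events carry no mass. These are identically zero paths and the null sets in Proposition~\ref{prop:replacement}. They are null for \(\eta_\rho\) for every fixed \(X''\), so they remain null jointly. We must also check that Mehler's formula holds pointwise at the points \(\mathsf Qx+\eta\), which is the deterministic-\(z\) statement \eqref{eq:Mehler-coefficients}.
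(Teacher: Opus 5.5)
Your proposal is correct and follows the paper's proof: the paper also applies \eqref{eq:Mehler-coefficients} with common noise $\mathsf Qz+\eta'$, notes that $\rho\eta+\sqrt{s_\rho}\,\eta'$ has the law of $\eta$ and is independent of $z$, and uses conditional Jensen together with Proposition~\ref{prop:replacement} at the shifted coordinates $\rho x+\sqrt{s_\rho}\,z$ and $\rho y+\sqrt{s_\rho}\,z$. It then averages over $z$ using $p\le5$, exactly as you do.
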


\begin{proof}
In \eqref{eq:Mehler-coefficients}, use the same OU noise \(\mathsf Qz+\eta'\) for the two arguments, where \(z\sim\mathcal
N(0,I_p)\), \(\eta'\) is an independent copy of \(\eta\), and all
these variables are independent. Set \(\widetilde\eta=\rho\eta+\sqrt{s_\rho}\eta'\).
Then \(\widetilde\eta\) has the same law as \(\eta\)
and is independent of \(z\). The new
coordinates in the subspace are \(x_z:=\rho x+\sqrt{s_\rho}z\) and \(y_z:=\rho y+\sqrt{s_\rho}z\).
Conditional Jensen's inequality, followed by
Proposition~\ref{prop:replacement}, gives
\[
 \E_\eta\left|S_{N,\rho}(\mathsf Qx+\eta)
 -S_{N,\rho}(\mathsf Qy+\eta)\right|
 \le C\frac{L_{T_N}}{\sqrt{T_N}}
 \E_z(1+|x_z|+|y_z|)^2.
\]
The centering constants cancel. Since \(p\le5\), the last expectation
is at most \(C(1+|x|+|y|)^2\), uniformly in \(\rho\).
\end{proof}

\begin{lemma}
\label{lem:weighted-test-covariance}
For every \(N\ge N_0\), \(a,b\in[0,T_N]\) and \(\rho\in[1/2,1)\), let \(X\) be
orthonormal Gaussian coordinates for \(\mathfrak H_{a,b}^{(N)}\). If \(k:\mathbb R^p\to\mathbb R\)
is measurable and
\(\E[|k(X)|(1+|X|)^2]<\infty\), then for every Lipschitz function
\(g:\mathbb R\to\mathbb R\),
\begin{equation*}
 \begin{aligned}
 |\Cov(k(X),g(S_{N,\rho}))|
 &\le C\operatorname{Lip}(g)\frac{L_{T_N}}{\sqrt{T_N}}\times\E[|k(X)|(1+|X|)^2].
 \end{aligned}
\end{equation*}
\end{lemma}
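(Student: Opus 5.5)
The plan is to combine the resampling identity \eqref{eq:local-resampling-identity} with Lemma~\ref{lem:OU-stability}. Use the decomposition \eqref{eq:coefficient-decomposition}, \(\boldsymbol{\xi}_N=\mathsf QX+\eta\), and let \(X'\) be an independent copy of \(X\), independent of \(\eta\). The argument that gave \eqref{eq:local-resampling-identity} used only that \(K_{N,\rho}(a,b)\) is a measurable function of \(X\). It therefore applies verbatim to any measurable \(k\) with \(k(X)\) integrable, and gives
\[
 \Cov(k(X),g(S_{N,\rho}))
 =\E\bigl[k(X)\bigl\{g(S_{N,\rho}(\mathsf QX+\eta))-g(S_{N,\rho}(\mathsf QX'+\eta))\bigr\}\bigr].
\]
This rests on two facts. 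First, \(\mathsf QX'+\eta\) has the law of \(\boldsymbol{\xi}_N\) and is independent of \(X\), so \(\E[k(X)g(S_{N,\rho}(\mathsf QX'+\eta))]=\E k(X)\,\E g(S_{N,\rho})\). Second, all the expectations involved are finite. To see the second point, note that \(S_{N,\rho}\) is bounded for fixed \(N\): by \eqref{eq:Mehler-coefficients} it is an average of \((\mathcal Z_N-\E\mathcal Z_N)/\sqrt{T_N}\), and \(0\le\mathcal Z_N\le2N\). Hence \(g(S_{N,\rho})\) is bounded, and integrability of \(k(X)\) is enough.

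Next, use the Lipschitz property to bound the absolute value by
\[
 \operatorname{Lip}(g)\,\E\bigl[|k(X)|\,\bigl|S_{N,\rho}(\mathsf QX+\eta)-S_{N,\rho}(\mathsf QX'+\eta)\bigr|\bigr].
\]
Condition on \((X,X')\). Since \(\eta\) is independent of \((X,X')\), Fubini and Lemma~\ref{lem:OU-stability} with \(x=X\), \(y=X'\) bound the inner \(\eta\)-expectation by
\[
 C\frac{L_{T_N}}{\sqrt{T_N}}(1+|X|+|X'|)^2.
\]
What remains is \(\E[|k(X)|(1+|X|+|X'|)^2]\). Use \((1+|X|+|X'|)^2\le2(1+|X|)^2+2|X'|^2\), integrate out \(X'\) with \(\E|X'|^2=p\le5\), and use \(\E|k(X)|\le\E[|k(X)|(1+|X|)^2]\). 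This bounds the quantity by \(C\,\E[|k(X)|(1+|X|)^2]\), which is the claim.

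The main care point is the measurability needed to apply Lemma~\ref{lem:OU-stability} pointwise in \((x,y)\) and then integrate. The map \((x,y,\eta)\mapsto S_{N,\rho}(\mathsf Qx+\eta)\) is jointly measurable, indeed continuous, since \(S_{N,\rho}\) is given by the integral \eqref{eq:S-N-rho-definition} with smooth integrand. Tonelli then applies to the nonnegative integrand, and the uniformity of Lemma~\ref{lem:OU-stability} in \(x,y,a,b,\rho\) makes the constant independent of everything except numerical quantities.
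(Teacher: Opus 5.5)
Your proposal is correct and follows the paper's proof essentially step for step. Both use the resampling identity with an independent copy \(X'\), boundedness of \(S_{N,\rho}\) for fixed \(N\), the Lipschitz bound followed by conditioning on \((X,X')\) and Lemma~\ref{lem:OU-stability}, and finally integration over \(X'\) using \(p\le5\). Your measurability remark and the explicit splitting \((1+|X|+|X'|)^2\le2(1+|X|)^2+2|X'|^2\) only spell out details the paper leaves implicit.
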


\begin{proof}
Let \(X'\) be an independent copy of \(X\), also independent of
\(\eta\). As in \eqref{eq:local-resampling-identity},
\[
\begin{aligned}
\Cov(k(X),g(S_{N,\rho}))
&=\E\Bigl[k(X)\bigl\{
g(S_{N,\rho}(\mathsf QX+\eta))
-g(S_{N,\rho}(\mathsf QX'+\eta))
\bigr\}\Bigr].
\end{aligned}
\]
For fixed \(N\), \(S_{N,\rho}\) is bounded, so the covariance is
well defined under the stated moment assumption. Conditioning on
\(X,X'\), using the Lipschitz property of \(g\), and applying
Lemma~\ref{lem:OU-stability}, we obtain
\[
\begin{aligned}
|\Cov(k(X),g(S_{N,\rho}))|
&\le
C\operatorname{Lip}(g)\frac{L_{T_N}}{\sqrt{T_N}}
\E\bigl[
|k(X)|(1+|X|+|X'|)^2
\bigr].
\end{aligned}
\]
Since \(p\le5\) and \(X'\sim\mathcal N(0,I_p)\),$\E_{X'}(1+|X|+|X'|)^2
\le C(1+|X|)^2.$ Therefore
\[
|\Cov(k(X),g(S_{N,\rho}))|
\le
C\operatorname{Lip}(g)\frac{L_{T_N}}{\sqrt{T_N}}
\E\bigl[|k(X)|(1+|X|)^2\bigr].
\]
\end{proof}

\section{Completion of the Malliavin--Stein estimate}
\label{sec:malliavin-stein}

The replacement argument has given
Lemma~\ref{lem:weighted-test-covariance} for every local function of
the at most five Gaussian coordinates. We apply it to \(K_{N,\rho}(a,b)\).
This requires a weighted first absolute moment, uniform in
\(\rho\), and a separate bound for the endpoint term in
\eqref{eq:gamma-IBP-representation}. We then integrate the resulting
covariance estimate, prove Proposition~\ref{prop:regularized-clt},
and remove the regularization to obtain Theorem~\ref{thm:main}.

\subsection{Uniform weighted norms and test covariances}

The function \(K_{N,\rho}(a,b)\) is measurable with respect to the
Gaussian coordinates generated by \(Y_N(a),Y_N'(a),Y_N''(a),Y_N(b),Y_N'(b)\).
Thus Lemma~\ref{lem:weighted-test-covariance} is applicable once a
weighted first absolute moment has been estimated.

\begin{lemma}
\label{lem:weighted-kernel-bound}
For every sufficiently large \(N\), every \(a,b\in[0,T_N]\), and \(\rho\in[1/2,1)\), let \(X\) be any orthonormal Gaussian coordinate
vector for \(\mathfrak H_{a,b}^{(N)}\). Then
\begin{equation}
 \label{eq:K-weighted-first-moment}
 \E[|K_{N,\rho}(a,b)|(1+|X|)^2]\le C\chi(b-a).
\end{equation}
Moreover,
\begin{equation}
 \label{eq:boundary-L1}
 \E|\mathcal B_{N,\rho}|\le C\frac{L_{T_N}}{T_N}.
\end{equation}
\end{lemma}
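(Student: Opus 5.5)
The plan is to prove both bounds by freezing the at most five Gaussian coordinates generated by $Y_N(a),Y_N'(a),Y_N''(a),Y_N(b),Y_N'(b)$ and exploiting the fact that $Q_{N,\rho}(a)$ behaves like an approximate identity in the pair $(Y_N(a),V_N(a))$.

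\emph{Reducing the weight.} Since $|X|^2=|\Pi_{a,b}^{(N)}\boldsymbol\xi_N|^2$ does not depend on the choice of orthonormal coordinates, the left side of \eqref{eq:K-weighted-first-moment} is invariant. First I would replace $(1+|X|)^2$ by a polynomial in the five generating variables. For $a$ and $b$ far apart, or for $a=b$, the Gram matrix of the spanning vectors has a uniformly bounded inverse, so $|X|\le C(|Y_N(a)|+|Y_N'(a)|+|Y_N''(a)|+|Y_N(b)|+|Y_N'(b)|)$. For nearby distinct $a,b$ this fails. Instead I would use the divided-difference basis from Appendix~\ref{app:projection-estimates}. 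Its Gram matrix is uniformly invertible, so $|X|$ is bounded by the absolute values of the divided differences $\bigl(Y_N(b)-Y_N(a)-(b-a)Y_N'(a)\bigr)/(b-a)^2$ and analogous quantities. By Taylor's formula these are bounded by $\sup_{[a,b]}|Y_N^{(k)}|$ for $k\le3$. In every case I would get $(1+|X|)^2\le C\,\Xi^2$, where $\Xi:=1+\sum_{k\le3}\sup_{|s-a|\le 2}|Y_N^{(k)}(s)|+|V_N(b)|$. This random variable has Gaussian moments of every order, uniformly in $N$ and $a$ (Dudley's bound together with \eqref{eq:all-derivative-bounds}).

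\emph{First moment.} By \eqref{eq:J-kernel-envelope}, $|K_{N,\rho}(a,b)|\le C\chi(b-a)\,B_\rho(Y_N(a))B_\rho(V_N(a))\,|H_N(a)|(1+|V_N(b)|)$. It therefore suffices to show that $\E[B_\rho(Y_N(a))B_\rho(V_N(a))\,W]\le C$ uniformly in $\rho$, where $W=|H_N(a)|(1+|V_N(b)|)\Xi^2$. The key observation is that $(Y_N(a),V_N(a))\sim\gamma_2$, and $B_\rho(x)\varphi(x)=\rho^{-1}\varphi_{s_\rho/\rho^2}(x)\,\varphi\bigl(x/\rho\bigr)\cdot$const, up to bounded constants. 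In other words, $\int B_\rho(x)B_\rho(y)\,u(x,y)\,\gamma_2(dx,dy)\le C\sup_{|(x,y)|\le1}u+\text{(tail)}$, since $B_\rho\otimes B_\rho\,\gamma_2$ is a probability-like measure of bounded mass concentrating at the origin. I would therefore condition on $(Y_N(a),V_N(a))=(x,y)$ and write the expectation as $\int B_\rho(x)B_\rho(y)\,\E[W\mid x,y]\,\gamma_2(dx,dy)$. The conditional law of $W$ given $(x,y)$ is a Gaussian functional whose conditional mean is affine in $(x,y)$ and whose conditional covariance is bounded. This yields $\E[W\mid x,y]\le C(1+|x|+|y|)^m$. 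Moreover $\int B_\rho(x)(1+|x|)^m\varphi(x)\,dx\le C$ uniformly for $\rho\ge1/2$, because the product is a Gaussian density of variance $s_\rho\le 3/4$ times a bounded constant. This gives \eqref{eq:K-weighted-first-moment}.

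\emph{Boundary term.} For \eqref{eq:boundary-L1}, at $a\in\{0,T_N\}$ I would bound $|A_\rho'|\le1$ (indeed $A_\rho'=\rho(2\Phi(\cdot)-1)$). By \eqref{eq:J-kernel-envelope}, $\bigl|\int_0^{T_N}J_{N,\rho}(a,b)\,db\bigr|\le C\int_0^{T_N}\chi(b-a)(1+|V_N(b)|)\,db$. Then I would compute $\E[B_\rho(Y_N(a))(1+|V_N(b)|)]$ by conditioning on $Y_N(a)$ exactly as above. Because $V_N(b)$ given $Y_N(a)=x$ has mean $O(|x|)$ and variance at most one, this expectation is at most $C$. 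Integrating $\chi$ over $[0,T_N]$ produces $CL_{T_N}$, and dividing by $T_N$ gives the claim.

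\emph{Main obstacle.} The hardest step is the first one: the uniform control of $|X|$ by derivatives of $Y_N$ when $b$ is near $a$, which requires reusing the divided-difference Gram bound from the appendix.
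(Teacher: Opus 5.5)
Your proposal is correct in outline. Your treatment of the boundary term \eqref{eq:boundary-L1} is essentially the paper's: bound $|A_\rho'|\le1$, use the envelope \eqref{eq:J-kernel-envelope}, regress $V_N(b)$ on $Y_N(a)$, absorb the resulting factor $1+|x|$ with the Gaussian bump, and integrate $\chi(b-a)$ in $b$.

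For \eqref{eq:K-weighted-first-moment}, however, you take a much longer route than the paper. The paper only needs the easy comparison between $|X|$ and the linear variables in the kernel.
\begin{itemize}
\item Since $Y_N(a)$ and $V_N(a)$ are orthonormal elements of the Gaussian space of $\mathfrak H_{a,b}^{(N)}$, one may take them as $X_1,X_2$.
\item Any linear variable $L$ in that space equals $c\cdot X$ with $|c|=\|L\|_2$. Hence $|H_N(a)|\le C|X|$ and $|V_N(b)|\le|X|$.
\item Therefore $|K_{N,\rho}(a,b)|(1+|X|)^2\le C\chi(b-a)B_\rho(X_1)B_\rho(X_2)(1+|X|)^4$.
\item The identity $B_\rho\varphi=\varphi(0)\varphi_{s_\rho}$ turns the expectation into a $p$-dimensional Gaussian integral in which two variances are replaced by $s_\rho\le1$. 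This is bounded uniformly in $\rho$.
\item Rotation invariance of $|X|$ then gives the claim for every orthonormal coordinate system.
\end{itemize}
You instead bound $|X|$ from above by the generating variables, which is the reverse and harder inequality. It forces you to use the uniform conditioning of Appendix~\ref{app:projection-estimates}, including the divided-difference basis near the diagonal. It also requires suprema of derivatives with Dudley-type moment bounds, and a conditioning on $(Y_N(a),V_N(a))$. This does work, but the step you flag as the main obstacle is not needed at all. The paper's argument uses no Gram-matrix information for this lemma.

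If you keep your route, fix two small slips.
\begin{itemize}
\item $\Xi$ must also contain $|Y_N(b)|$, which is not covered by $\sup_{|s-a|\le2}$ when $|b-a|>2$.
\item The well-conditioned basis of the appendix uses $D_{3,N}$ and $D_{4,N}$, which are controlled by $Y_N^{(3)}$ and $Y_N^{(4)}$, so you need $k\le4$. The quantity $\bigl(Y_N(b)-Y_N(a)-(b-a)Y_N'(a)\bigr)/(b-a)^2$ you wrote is close to $Y_N''(a)/2$ and would not give a uniformly invertible Gram matrix.
\end{itemize}
Your displayed formula for $B_\rho\varphi$ is also garbled. The exact identity is $B_\rho(x)\varphi(x)=\varphi(0)\varphi_{s_\rho}(x)$, which is what your subsequent conclusion (a Gaussian density of variance $s_\rho$ times a constant) actually uses.
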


\begin{proof}
Choose the first two coordinates of \(X\) to be \(X_1=Y_N(a)\) and \(X_2=V_N(a)\). This is possible because these
two variables are orthonormal in the first Wiener chaos. All other
linear Gaussian variables appearing in \(K_{N,\rho}(a,b)\) are linear
forms in \(X\), with coefficient norms equal to their uniformly
bounded standard deviations. By \eqref{eq:J-kernel-envelope},
\begin{equation*}
 |K_{N,\rho}(a,b)|
 \le C\chi(b-a)B_\rho(X_1)B_\rho(X_2)(1+|X|)^2.
\end{equation*}
The identity
\begin{equation}
 \label{eq:bump-change-of-measure}
 B_\rho(x)\varphi(x)=\varphi(0)\varphi_{s_\rho}(x)
\end{equation}
shows that, in the Gaussian integral, the weight
\(B_\rho(X_1)B_\rho(X_2)\) replaces the first two standard
Gaussian densities by densities of centered Gaussians with
variance \(s_\rho\), up to the fixed factor \(\varphi(0)^2\).
Consequently,
\[
 \E[B_\rho(X_1)B_\rho(X_2)(1+|X|)^4]\le C,
\]
uniformly in \(\rho\), since \(s_\rho\le1\) and \(p\le5\). This proves
\eqref{eq:K-weighted-first-moment}. The weight depends only on \(|X|\),
so the assertion holds in any orthonormal coordinate system.

For the boundary term, use \(|A_\rho'|\le1\) and Gaussian regression:
\[
 \E[1+|V_N(b)|\mid Y_N(a)=x]\le C(1+|x|).
\]
Together with \eqref{eq:J-kernel-envelope} and
\eqref{eq:bump-change-of-measure}, this gives
\[
 \E|B_\rho(Y_N(a))A_\rho'(V_N(a))J_{N,\rho}(a,b)|
 \le C\chi(b-a).
\]
Since
\begin{equation}
 \label{eq:chi-integral}
 \sup_{a\in[0,T_N]}\int_0^{T_N}\chi(b-a)\,db\le CL_{T_N},
\end{equation}
integration in \(b\) and separate treatment of the two endpoints in
\eqref{eq:boundary-definition} prove \eqref{eq:boundary-L1}.
\end{proof}

Combining Lemmas~\ref{lem:weighted-test-covariance} and
\ref{lem:weighted-kernel-bound}, we obtain, for every Lipschitz function
\(g\) and uniformly in \(\rho\in[1/2,1)\),
\begin{equation}
 \label{eq:local-test-covariance}
 |\Cov(K_{N,\rho}(a,b),g(S_{N,\rho}))|
 \le C\operatorname{Lip}(g)\chi(b-a)
 \frac{L_{T_N}}{\sqrt{T_N}}.
\end{equation}

\subsection{Normal approximation}

\begin{proof}[Proof of Proposition~\ref{prop:regularized-clt}]
Let \(h\) be a smooth function with \(\operatorname{Lip}(h)\le1\),
and let \(f_h\) solve its normal Stein equation. Set
\(g(u)=f_h'(u/\sigma_{N,\rho})\). By
\eqref{eq:direct-Stein-covariance},
\[
 |\E h(S_{N,\rho}/\sigma_{N,\rho})-\E h(Z)|
 =\sigma_{N,\rho}^{-2}|\Cov(\Gamma_{N,\rho},g(S_{N,\rho}))|.
\]
By \eqref{eq:Stein-solution-bounds},
\(\|f_h'\|_\infty\le1\) and \(\operatorname{Lip}(g)\le2/\sigma_{N,\rho}\).
From
\eqref{eq:gamma-IBP-representation}, \eqref{eq:boundary-L1}, and
\eqref{eq:local-test-covariance},
\begin{align*}
 |\Cov(\Gamma_{N,\rho},g(S_{N,\rho}))|
 &\le2\|f_h'\|_\infty\E|\mathcal B_{N,\rho}|+\frac2{T_N}\int_{[0,T_N]^2}
 |\Cov(K_{N,\rho}(a,b),g(S_{N,\rho}))|\,da\,db\\*
 &\le C\frac{L_{T_N}}{T_N}
 +\frac{CL_{T_N}}{\sigma_{N,\rho} T_N^{3/2}}
 \int_{[0,T_N]^2}\chi(b-a)\,da\,db\\*
 &\le C\frac{L_{T_N}}{T_N}+C\frac{L_{T_N}^2}{\sigma_{N,\rho}\sqrt{T_N}}.
\end{align*}
The last step uses \eqref{eq:chi-integral}. All interchanges are
justified for fixed \(N\) and \(\rho<1\) by the polynomial moment
bounds. The displayed estimates also give absolute integrability of
the covariance terms being integrated.

Taking the supremum over the test functions, and using smooth
approximation to include every \(1\)-Lipschitz function, gives the
more explicit bound
\begin{equation*}
 d_{\mathrm W}(S_{N,\rho}/\sigma_{N,\rho},Z)
 \le C\left\{\frac{L_{T_N}}{\sigma_{N,\rho}^2T_N}
 +\frac{L_{T_N}^2}{\sigma_{N,\rho}^3\sqrt{T_N}}\right\}.
\end{equation*}
The third power of \(\sigma_{N,\rho}\) in the second term includes the
Lipschitz constant of \(u\mapsto f_h'(u/\sigma_{N,\rho})\).
If \(\sigma_{N,\rho}^2\ge c_0\), the right-hand side is at most
\(C(c_0)L_{T_N}^2/\sqrt{T_N}\). This proves
\eqref{eq:regularized-CLT-bound}.
\end{proof}

The argument is an application of the Malliavin--Stein identity, but it
does not require a bound for \(\Var(\Gamma_{N,\rho})\) that is uniform
as \(\rho\uparrow1\). The quantity estimated is its covariance with
the Stein test function. The two logarithmic factors arise from
\eqref{eq:replacement} and the final integration in
\eqref{eq:chi-integral}, respectively.

\subsection{Proof of the main theorem}
\label{subsec:proof-main}

We now pass from \(S_{N,\rho}\) to \(S_N\).
The uniformity in \(\rho\) established in
Proposition~\ref{prop:regularized-clt} allows us to use only the convergence for fixed \(N\) in \eqref{eq:fixed-N-OU-limit}.

\begin{proof}[Proof of Theorem~\ref{thm:main}]
By \eqref{eq:known-variance-asymptotics}, there exists \(v_*>0\) such
that \(v_N\ge v_*\) for all sufficiently large \(N\). Fix such an \(N\). Lemma~\ref{lem:regularized-chaos-representation} and the reverse
triangle inequality in \(L^2\) give
\begin{equation*}
 |\sigma_{N,\rho}-v_N|
 \le\|S_{N,\rho}-S_N\|_2\longrightarrow0
 \qquad\text{as }\rho\uparrow1.
\end{equation*}
Thus \(\sigma_{N,\rho}\ge v_*/2\) for \(\rho\) sufficiently close
to one. The point beyond which this holds may depend on \(N\), but
the lower bound \(v_*/2\) does not.

Under the natural coupling,
\[
 \left\|\frac{S_N}{v_N}-
 \frac{S_{N,\rho}}{\sigma_{N,\rho}}\right\|_1
 \le\frac{\|S_N-S_{N,\rho}\|_2}{v_N}
 +\frac{|\sigma_{N,\rho}-v_N|}{v_N}\longrightarrow0.
\]
Using Proposition~\ref{prop:regularized-clt} with
\(c_0=v_*^2/4\) and then letting \(\rho\uparrow1\), we obtain
\[
 d_{\mathrm W}(S_N/v_N,Z)
 \le C\frac{(\log T_N)^2}{\sqrt{T_N}}.
\]
The constant is independent of \(N\), since the estimate for the
regularized statistic was uniform on
\(\{\sigma_{N,\rho}\ge v_*/2\}\). No interchange of the limits in \(N\) and \(\rho\) is involved: the regularization is removed for
each fixed \(N\). Finally, \(T_N=N\pi\) and
\[
 \frac{S_N}{v_N}=
 \frac{\mathcal Z_N-\E\mathcal Z_N}
 {\sqrt{\Var(\mathcal Z_N)}}.
\]
This proves the upper bound in \eqref{eq:main-bound}.

We finish with the lower bound, which uses only the lattice support and
the order of the variance. Let \(h_N=(\sqrt{T_N}\,v_N)^{-1}\) be the lattice spacing of the
standardized zero count. By \eqref{eq:known-variance-asymptotics},
\(h_N\asymp N^{-1/2}\). Since \(\mathcal Z_N\) is integer-valued,
\(S_N/v_N\) is supported on
\[
 \Lambda_N:=\left\{h_N(k-\E\mathcal Z_N):
 k\in\mathbb Z\right\}.
\]
The function \(g_N(x):=\operatorname{dist}(x,\Lambda_N)\) is
\(1\)-Lipschitz and vanishes on this support. Hence
\[
 d_{\mathrm W}(S_N/v_N,Z)\ge\E g_N(Z).
\]
For all sufficiently large \(N\), \(h_N\le1/2\), and there are at
least \(h_N^{-1}\) complete lattice cells contained in \([-1,1]\).
On every cell \([u,u+h_N]\),
\[
 \int_u^{u+h_N}\operatorname{dist}(x,\Lambda_N)\,dx
 =\int_0^{h_N}\min\{t,h_N-t\}\,dt=\frac{h_N^2}{4}.
\]
The standard Gaussian density is bounded below by \(\varphi(1)>0\)
on \([-1,1]\). Summing over these cells gives
\[
 \E g_N(Z)\ge\frac{\varphi(1)}4h_N\ge cN^{-1/2}.
\]
Together with the upper bound, this proves
\eqref{eq:main-bound}.
\end{proof}

\appendix

\section{Uniform conditioning and projection estimates}
\label{app:projection-estimates}

This appendix proves Lemma~\ref{lem:projection-tail}. The main point is
to choose coordinates for \(\mathfrak H_{a,b}^{(N)}\) whose covariance
matrix remains uniformly nondegenerate as \(b\to a\). All covariance
decay estimates below involve time differences in \([-T_N,T_N]\).

\subsection{Uniformly conditioned coordinates}

Throughout this appendix, write \(h=b-a\). By stationarity, the
relevant covariance matrices depend on \(a,b\) only through \(h\).

We establish the following uniform conditioning statement.
There exist constants \(h_0\in(0,1/4]\), \(0<c\le C<\infty\), and
\(N_0\ge1\) such that, for every \(N\ge N_0\) and
\(a,b\in[0,T_N]\), the coordinate vector specified below spans
\(\mathfrak H_{a,b}^{(N)}\) and has covariance eigenvalues in \([c,C]\).
The chosen vector is \((Y_N(a),Y_N'(a),Y_N''(a))\) when \(h=0\),
the adapted vector \eqref{eq:adapted-five-coordinates} when
\(0<|h|\le h_0\), and the raw vector
\eqref{eq:raw-five-coordinates} when \(|h|>h_0\).

\begin{proof}
When \(h=0\), uniform conditioning follows from
\eqref{eq:three-jet-covariance}. For \(h\ne0\), define
\begin{align}
D_{3,N}(a,b)
&=\frac{Y_N(b)-Y_N(a)-hY_N'(a)-\frac12h^2Y_N''(a)}{h^3},
\notag\\*
D_{4,N}(a,b)
&=\frac{Y_N'(b)-Y_N'(a)-hY_N''(a)-3h^2D_{3,N}(a,b)}{h^3}.
\label{eq:D4-definition}
\end{align}
Taylor's formula with integral remainder gives
\begin{align}
D_{3,N}(a,b)
&=\frac12\int_0^1(1-u)^2Y_N^{(3)}(a+uh)\,du,
\label{eq:D3-integral}\\*
D_{4,N}(a,b)
&=\frac12\int_0^1u(1-u)^2Y_N^{(4)}(a+uh)\,du.
\label{eq:D4-integral}
\end{align}
For the second identity, the numerator in
\eqref{eq:D4-definition} equals
\[
h^2\int_0^1
\left\{(1-u)-\frac32(1-u)^2\right\}
Y_N^{(3)}(a+uh)\,du.
\]
The polynomial in braces has primitive \(-u(1-u)^2/2\), which
vanishes at both endpoints, so integration by parts yields
\eqref{eq:D4-integral}, also for \(h<0\).

Conversely,
\[
\begin{aligned}
Y_N(b)
&=Y_N(a)+hY_N'(a)+\tfrac12h^2Y_N''(a)+h^3D_{3,N}(a,b),\\
Y_N'(b)
&=Y_N'(a)+hY_N''(a)+3h^2D_{3,N}(a,b)+h^3D_{4,N}(a,b).
\end{aligned}
\]
Hence
\begin{equation}
\label{eq:adapted-five-coordinates}
\bigl(Y_N(a),Y_N'(a),Y_N''(a),
D_{3,N}(a,b),D_{4,N}(a,b)\bigr)
\end{equation}
spans the same Gaussian space as the raw two-time coordinates.

Introduce the symmetric spectral measure
\(\mu_N=(2N)^{-1}\sum_{n=1}^N
(\delta_{n/N}+\delta_{-n/N})\).
After removing the common phase at \(a\), the spectral functions of
\eqref{eq:adapted-five-coordinates} converge uniformly on \([-1,1]\),
as \(h\to0\), to
\begin{equation}
\label{eq:limiting-five-jet}
1,\qquad i\xi,\qquad(i\xi)^2,\qquad
\frac{(i\xi)^3}{6},\qquad\frac{(i\xi)^4}{24}.
\end{equation}
Indeed, \eqref{eq:D3-integral}--\eqref{eq:D4-integral} and
\(|e^{iuh\xi}-1|\le |h|\) for \(|\xi|\le1\) give an \(O(|h|)\)
error uniformly in \(N\).

Let \(\Sigma_N(0)\) be the Gram matrix of the five functions in
\eqref{eq:limiting-five-jet} under \(\mu_N\). After reordering the
even and odd coordinates, it is the direct sum of
\[
E_N:=
\begin{pmatrix}
1&-\alpha_N&\beta_N/24\\
-\alpha_N&\beta_N&-\gamma_N/24\\
\beta_N/24&-\gamma_N/24&\delta_N/576
\end{pmatrix},
\qquad
O_N:=
\begin{pmatrix}
\alpha_N&-\beta_N/6\\
-\beta_N/6&\gamma_N/36
\end{pmatrix}.
\]
The leading principal minors of \(E_N\) converge to
\(1\), \(4/45\), and \(4/4465125\), while
\(\alpha_N\to1/3\) and
\(\det O_N=(\alpha_N\gamma_N-\beta_N^2)/36\to1/4725\).
Thus the limiting matrices of both blocks are positive definite.
Consequently, for all sufficiently large \(N\), the eigenvalues of
\(\Sigma_N(0)\) lie in a fixed interval
\([c,C]\subset(0,\infty)\).

If \(\Sigma_N(h)\) denotes the covariance matrix of
\eqref{eq:adapted-five-coordinates}, then
\eqref{eq:D3-integral}--\eqref{eq:D4-integral} imply
\(\|\Sigma_N(h)-\Sigma_N(0)\|_{\mathrm{op}}\le C|h|\).
Choosing a sufficiently small fixed \(h_0\in(0,1/4]\) therefore
gives uniform conditioning whenever \(0<|h|\le h_0\).

For \(h_0\le |h|\le T_N\), use instead the raw coordinates
\begin{equation}
\label{eq:raw-five-coordinates}
\bigl(Y_N(a),Y_N'(a),Y_N''(a),Y_N(b),Y_N'(b)\bigr).
\end{equation}
After removing the common phase at \(a\), their spectral functions are
\(1,i\xi,(i\xi)^2,e^{ih\xi},i\xi e^{ih\xi}\). For every \(h\ne0\)
these functions are linearly independent. Indeed, a relation has the
form \(P(\xi)+e^{ih\xi}Q(\xi)=0\), with \(\deg P\le2\) and
\(\deg Q\le1\). Applying \((\partial_\xi-ih)^2\) gives
\((\partial_\xi-ih)^2P=0\). Since \(h\ne0\), this operator is
invertible on polynomials of degree at most two, so \(P=Q=0\).

Let \(\mu(d\xi)=\frac12\mathbf1_{[-1,1]}(\xi)\,d\xi\).
For every fixed \(H<\infty\), the Gram matrices under \(\mu_N\)
converge uniformly, for \(h_0\le |h|\le H\), to the corresponding
Gram matrices under \(\mu\). The latter are positive definite for
every \(h\ne0\), and their least eigenvalues depend continuously on
\(h\). Compactness therefore gives uniform conditioning on
\(h_0\le |h|\le H\), for all sufficiently large \(N\).

For \(H\le |h|\le T_N\), the covariance matrix of
\eqref{eq:raw-five-coordinates} is a perturbation of the direct sum
of the covariance matrices of \((Y_N,Y_N',Y_N'')\) and
\((Y_N,Y_N')\). The diagonal blocks are uniformly positive definite,
while Lemma~\ref{lem:uniform-covariance-decay} bounds the
off-diagonal block by \(C/|h|\). Choosing \(H\) sufficiently large,
and then increasing \(N_0\) if necessary, proves uniform conditioning
throughout this remaining regime.
\end{proof}

\subsection{The projection bound}

\begin{proof}[Proof of Lemma~\ref{lem:projection-tail}]
For each pair \((a,b)\), let \(\boldsymbol{\mathcal X}_{a,b}\) be the three raw
coordinates when \(a=b\), the adapted vector
\eqref{eq:adapted-five-coordinates} when \(0<|b-a|\le h_0\), and the
raw vector \eqref{eq:raw-five-coordinates} otherwise. Write
\(\Sigma_{a,b}=\Cov(\boldsymbol{\mathcal X}_{a,b})\) and
\(\boldsymbol c_j(t)=\Cov(\boldsymbol{\mathcal X}_{a,b},Y_N^{(j)}(t))\).
Since \(\boldsymbol{\mathcal X}_{a,b}\) spans \(\mathfrak H_{a,b}^{(N)}\), the
Gram-matrix formula for orthogonal projection, together with the
uniform conditioning established in the preceding subsection, gives
\begin{equation}
\label{eq:projection-Gram-formula}
\bigl\|\Pi_{a,b}^{(N)}e_{N,t}^{(j)}\bigr\|_{\mathfrak H}^2
=
\boldsymbol c_j(t)^\top\Sigma_{a,b}^{-1}\boldsymbol c_j(t)
\le C|\boldsymbol c_j(t)|^2.
\end{equation}

Every coordinate \(X_i\) of \(\boldsymbol{\mathcal X}_{a,b}\) can be written as
\(X_i=\int Y_N^{(r_i)}(u)\,\nu_i(du)\), where
\(0\le r_i\le4\) and \(\|\nu_i\|_{\mathrm{TV}}\le C\).
Here \(\|\nu\|_{\mathrm{TV}}\) denotes the total variation norm of a
finite signed measure \(\nu\). For raw coordinates, \(\nu_i\) is a point mass at \(a\) or \(b\);
for adapted coordinates,
\eqref{eq:D3-integral}--\eqref{eq:D4-integral} show that \(\nu_i\)
is supported on the segment joining \(a\) and \(b\), whose length is
at most \(h_0\).

Hence, for \(j=0,1,2\), every component of
\(\boldsymbol c_j(t)\) is, up to a sign, an integral of
\(r_N^{(r_i+j)}(t-u)\) against such a measure. In the adapted case,
\(\chi(t-u)\le C\{\chi(t-a)+\chi(t-b)\}\) uniformly for \(u\) on
this segment; for raw coordinates the same estimate is immediate.
Since \(r_i+j\le6\), Lemma~\ref{lem:uniform-covariance-decay} yields
\[
|\boldsymbol c_j(t)|
\le C\{\chi(t-a)+\chi(t-b)\},
\qquad j=0,1,2.
\]
Substituting this bound into \eqref{eq:projection-Gram-formula}
proves \eqref{eq:projection-tail}.
\end{proof}

\section{Details for the regularized kernels}
\label{app:soft-kernel-details}

This appendix verifies the regularity used in
Subsection~\ref{subsec:kernel-representation}
and proves Lemma~\ref{lem:kernel-estimates}.

\begin{proof}[Proof of Lemma~\ref{lem:kernel-estimates}]
For fixed \(\rho\in[1/2,1)\), the functions
\(B_\rho\) and \(B_\rho'\) are bounded, while
\(A_\rho(y)\le C(1+|y|)\) and \(|A_\rho'(y)|\le1\).
Since
\[
\partial_1F_\rho=B_\rho'A_\rho,
\qquad
\partial_2F_\rho=B_\rho A_\rho',
\]
the function \(F_\rho\) is continuously differentiable
and satisfies
\[
|F_\rho(x,y)|+|\nabla F_\rho(x,y)|
\le C_\rho(1+|y|).
\]

For \(0<u<1\), the explicit
formulas give
\[
 B_u'(x)=-\frac{u^2x}{s_u}B_u(x),\qquad
 |A_u'(y)|\le u,\qquad A_u(y)\le C(1+|y|).
\]
For fixed \(\rho<1\), differentiation under \(G_\rho=\int_0^\rho u^{-1}F_u\,du\) is legitimate: on compact
sets of \((x,y)\), the differentiated integrands are integrable at
\(u=0\), as is seen from these formulas. We obtain
\begin{align*}
 |\partial_xG_\rho(x,y)|
 &\le C(1+|y|)|x|\int_0^\rho
 u(1-u^2)^{-3/2}
 \exp\left(-\frac{u^2x^2}{2(1-u^2)}\right)du,\\*
 |\partial_yG_\rho(x,y)|
 &\le C\int_0^\rho(1-u^2)^{-1/2}
 \exp\left(-\frac{u^2x^2}{2(1-u^2)}\right)du.
\end{align*}
Use \(q=u/\sqrt{1-u^2}\), so that \(du=(1+q^2)^{-3/2}\,dq\). The first integral, including its
factor \(|x|\), is bounded by
\[
 |x|\int_0^\infty\frac q{\sqrt{1+q^2}}
 e^{-x^2q^2/2}\,dq\le C.
\]
For \(x\ne0\), this follows from \(q/\sqrt{1+q^2}\le1\) and the substitution \(w=|x|q\).
When \(x=0\), the original \(\partial_xG_\rho\) integrand is
identically zero. The second integral is bounded by
\[
 \int_0^\infty\frac{e^{-x^2q^2/2}}{1+q^2}\,dq\le C.
\]
This proves \(|\nabla G_\rho(x,y)|\le C(1+|y|)\), uniformly in \(\rho\). Together with the already established \(L^2(\gamma_2)\)
representation of \(G_\rho\), it also gives its Gaussian Sobolev
regularity.
\end{proof}

\paragraph{Acknowledgements.} 
This work was partially supported by the National Natural Science
Foundation of China (grants 12271475 and U23A2064).

\paragraph{Statement on artificial-intelligence assistance.}
GPT-6 Pro was used for language polishing, LaTeX assistance, and
occasional checking of technical details. It also provided a
lattice-based derivation of the \(N^{-1/2}\) lower bound.
All AI-assisted material was independently verified by the authors,
who take full responsibility for the manuscript.

\end{document}